\date{\today}
\newcommand{\Z}{{\mathbb Z}}
\newcommand{\R}{{\mathbb R}}
\newcommand{\C}{{\mathbb C}}
\newcommand{\D}{{\mathbb D}}
\newcommand{\E}{{\mathcal E}}
\newcommand{\B}{{\mathrm B}}
\newcommand{\St}{{\mathrm S}}
\newcommand{\loc}{{\mathrm{loc}}}
\newcommand{\PT}{{\mathrm{PT}}}
\newcommand{\LP}{{\mathrm{LP}}}
\newtheorem{theorem}{Theorem}[section]
\newtheorem{lemma}[theorem]{Lemma}
\newtheorem{coro}[theorem]{Corollary}
\newtheorem*{conj}{Conjecture}
\newtheorem*{quest}{Question}
\theoremstyle{definition}
\theoremstyle{definition}
\newtheorem*{defi}{Definition}
\renewcommand{\Im}{\mathrm{Im} \, }
\newcommand{\tr}{\mathrm{tr} }
\newcommand{\per}{{\mathrm{per}}}
\begin{document}

\title{Spectral Homogeneity of Limit-Periodic Schr\"odinger Operators}

\author[J.\ Fillman]{Jake Fillman}

\address{Department of Mathematics, Rice University, 6100 Main Street, MS-136, Houston, TX 77005, USA}

\email{jdf3@rice.edu}

\thanks{J.\ F.\ was supported in part by NSF grants DMS--1067988 and DMS--1361625.}

\author[M.\ Lukic]{Milivoje Lukic}

\address{Department of Mathematics, University of Toronto, Bahen Centre, 40 St.\ George St., Toronto, Ontario, CANADA, M5S 2E4 and  Department of Mathematics, Rice University, 6100 Main Street, MS-136, Houston, TX 77005, USA}

\email{mlukic@math.toronto.edu}

\thanks{M.\ L.\ was supported in part by NSF grant DMS--1301582. M.\ L.\ would also like to thank the Isaac Newton Institute for Mathematical Sciences, Cambridge, for support and hospitality during the programme ``Periodic and Ergodic Spectral Problems" where part of this work was undertaken.}

\maketitle

\begin{abstract}
We prove that the spectrum of a limit-periodic Schr\"odinger operator is homogeneous in the sense of Carleson whenever the potential obeys the Pastur--Tkachenko condition. This implies that a dense set of limit-periodic Schr\"odinger operators have purely absolutely continuous spectrum supported on a homogeneous Cantor set. When combined with work of Gesztesy--Yuditskii, this also implies that the spectrum of a Pastur--Tkachenko potential has infinite gap length whenever the potential fails to be uniformly almost periodic.
\end{abstract}

\section{Introduction}

We study continuum Schr\"odinger operators, that is, operators on $L^2(\R)$ of the form $H = H_V$, where
\begin{equation} \label{so:def}
H_V\phi
=
-\phi'' + V\phi
\end{equation}
and $V$ is a real-valued funtion on $\R$. It is well known that \eqref{so:def} defines a self-adjoint operator on a dense linear subspace of $L^2(\R)$ under appropriate restrictions on the potential, $V$. We are particularly interested families of \emph{ergodic dynamically defined potentials}. More specifically, an ergodic potential is one which may be written in the form
$$
V_\omega(x)
=
f(\tau_x \omega),
\quad
x \in \R, \, \omega \in \Omega,
$$
where $(\Omega,\mathcal B, \mu)$ is a probability measure space, $\tau$ is a $\mu$-ergodic flow on $\Omega$, and $f:\Omega \to \R$ is a suitable measurable function. In this setting, one may use tools from dynamics to probe the operators as a family, proving statements which hold for $\mu$-almost every $\omega \in \Omega$.

 For operators of the form \eqref{so:def}, there are essentially two broad classes of spectral-theoretic results: \emph{direct spectral theory}, which deduces characteristics of the spectrum and spectral measures of $H_V$ for a given $V$ or a given class of $V$'s, and \emph{inverse spectral theory}, which fixes spectral information and attempts to describe the space of potentials which exhibit such spectral data. In rare instances, one is able to obtain a result which goes in both directions simultaneously -- such results are called gems of spectral theory in \cite{simszego}. In this paper, we will consider the class of Pastur--Tkachenko potentials, which have been heavily studied from the inverse point of view \cite{PT1, PT2}; see also~\cite{Chul81,MC84}. To begin, let us recall the definitions of limit-periodic and almost-periodic potentials, and the Pastur--Tkachenko class:

\begin{defi}
A potential $V:\R \to \R$ is (\emph{Stepanov}) \emph{limit-periodic} if there exist periodic $V_n \in L^2_{\loc}(\R)$ such that
$$
\lim_{n \to \infty} \| V - V_n \|_{\St}
=
0,
$$
where $\lVert \cdot \rVert_{\St}$ stands for the Stepanov norm,
\[
\| f \|_{\St}
=
\sup_{x\in\mathbb{R}} \left(
\int_{x}^{x+1} \!|f(t)|^2 \, dt
\right)^{1/2}.
\]
We will denote the space of all Stepanov limit-periodic functions on $\R$ by $\LP(\R)$. On that space, we will also use the \emph{Besicovitch norm}, defined by
$$
\| f \|_{\B}
=
\lim_{T \to \infty} \left(
\frac{1}{2T}\int_{-T}^T \!|f(t)|^2 \, dt
\right)^{1/2}.
$$
It is easy to see that one always has $\|f\|_{\B} \le \| f \|_{\St}$. Notice further that any Stepanov limit-periodic potential is \emph{Stepanov} \emph{almost-periodic}, i.e., it is a limit of trigonometric polynomials with respect to $\| \cdot \|_{\St}$. If $V$ is almost-periodic, then it can be realized in the ergodic context with
$$
\Omega
=
\mathrm{hull}(V)
=
\overline{\{ \tau_x V : x \in \R \}},
\quad
\tau_x \omega
=
\omega(\cdot + x),
\quad
f(\omega)
=
\omega(0),
$$
where the closure is meant to be taken with respect to $\| \cdot \|_{\St}$.

\bigskip

We say that a limit-periodic potential $V$ satisfies the \emph{Pastur--Tkachenko condition} if its periodic approximants can be chosen such that
\begin{equation} \label{eq:pt:def}
\lim_{n \to \infty} e^{b T_{n+1}} \| V - V_n \|_{\St}
=
0
\text{ for every } b > 0,
\end{equation}
where $T_n$ denotes the period of $V_n$, and $T_n$ divides $T_{n+1}$ for all $n$; compare~\cite{Chul81,MC84,PT1,PT2}. We denote by $\PT(\R)$ the set of all potentials $V \in \LP(\R)$ which obey the Pastur--Tkachenko condition. Notice that $V \in \PT(\R)$ means that $V$ also satisfies \eqref{eq:pt:def} with $\| \cdot \|_{\St}$ replaced by $\| \cdot \|_{\B}$. 
\end{defi}

Our main goal in this paper is to prove that the spectrum of every potential in the class $\PT(\R)$ is homogeneous in the sense of Carleson. Loosely speaking, this says that the spectrum has a uniform positive density with respect to Lebesgue measure around each of its points. The precise formulation follows.

\begin{defi}
We say that a set $\Sigma \subseteq \R$ is $\tau$-\emph{homogeneous} if there exists $\delta_0 > 0$ such that
$$
|B_\delta(x) \cap \Sigma|
\geq \tau \delta
\text{ for all } 
x \in \Sigma 
\text{ and } 
0<\delta\leq\delta_0,
$$
where $B_\delta(x) = (x - \delta, x + \delta)$ denotes the $\delta$-neighborhood of $x$ and $|\cdot|$ denotes Lebesgue measure (compare \cite{carleson83}).
\end{defi}

\begin{theorem} \label{t:pt}
If $V \in \PT(\R)$, then $\sigma(H_V)$ is $\tau$-homogeneous for every $\tau \in (0,1)$.
\end{theorem}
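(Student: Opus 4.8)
The plan is to propagate a quantitative form of homogeneity through the periodic approximants $H_{V_n}$, using the super-exponential rate in \eqref{eq:pt:def} to control the total length of the gaps opened at each refinement step. Recall from Floquet theory that for periodic $V_n$ of period $T_n$ the spectrum $\sigma(H_{V_n})$ is a union of compact bands whose edges occur at the quasimomenta $k\in\frac{\pi}{T_n}\Z$. Since $T_n\mid T_{n+1}$ we may also view $H_{V_n}$ as $T_{n+1}$-periodic, so that $H_{V_n}$ and $H_{V_{n+1}}$ have their band edges at a common set of quasimomenta; the gaps of $\sigma(H_{V_{n+1}})$ then split into those already corresponding to genuine gaps of $H_{V_n}$ and the \emph{new gaps of generation $n+1$}, which open at quasimomenta where $H_{V_n}$ had a closed (hence degenerate) gap. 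First I would record two quantitative inputs, both obtained by resolvent/monodromy comparison together with the Sobolev embedding of $H^1$ into $L^\infty$ on unit intervals (which converts $L^1$ or $L^\infty$ bounds on $V_{n+1}-V_n$ over a period into the Stepanov norm): (i) a local Hausdorff continuity estimate $\dist_{\mathrm H}\bigl(\sigma(H_{V_n})\cap K,\ \sigma(H_{V_{n+1}})\cap K\bigr)\le C_K\|V_{n+1}-V_n\|_{\St}$ for compact $K\subseteq\R$; and (ii) a new-gap estimate: every new gap of generation $n+1$ lying in $K$ has length at most $C_K\|V_{n+1}-V_n\|_{\St}^{1/2}$ (the exponent $\tfrac12$ reflecting the degenerate band edge), while the number of generation-$(n+1)$ gaps inside any single band of $\sigma(H_{V_n})$ is at most $T_{n+1}/T_n+O(1)$.

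From (i) one obtains $\dist_{\mathrm H}\bigl(\sigma(H_V),\sigma(H_{V_n})\cap K\bigr)\le\eta_n$ with $\eta_n=C_K\sum_{k\ge n}\|V_{k+1}-V_k\|_{\St}$, which by \eqref{eq:pt:def} satisfies $e^{bT_n}\eta_n\to0$ for every $b>0$; writing $S_n=\sigma(H_{V_n})+[-\eta_n,\eta_n]$ within $K$, the sets $S_n$ are then nested, each band of $S_{n+1}$ lies inside a band of $S_n$, and $\sigma(H_V)=\bigcap_n S_n$. Every gap of $\sigma(H_V)$ arises, at the level where it is born, as a slightly shrunken new gap of some generation $m$. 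Using (ii) together with the fact that a band of $\sigma(H_{V_m})$ at energy $E$ has length at least $c\min\bigl(\rho_m,(1+E)^{1/2}/T_m\bigr)$ for a constant $\rho_m>0$ — there being only finitely many anomalously short bands, since the gaps of the fixed periodic potential $V_m$ decay with energy — one iterates over the refinement levels $m>N$ to bound the total Lebesgue measure of the gaps of $\sigma(H_V)$ inside any fixed band $B$ of $S_N$ by
\[
\frac{C}{T_N}\sum_{m>N}T_m\,\|V_{m+1}-V_m\|_{\St}^{1/2}
\]
times a harmless energy factor; call this $\xi_N$. The combinatorial weights $T_m$ do no damage because \eqref{eq:pt:def} forces $e^{bT_m}\|V_{m+1}-V_m\|_{\St}\to0$ for all $b$, so $e^{bT_N}\xi_N\to0$ as well; and since the individual gaps inside $B$ are themselves super-exponentially small compared with $\rho_N$, one concludes: for every $\epsilon>0$ there are $N$ and $\delta_0>0$ such that inside every band $B$ of $S_N$, every subinterval $I\subseteq B$ with $|I|\le\delta_0$ satisfies $|I\setminus\sigma(H_V)|\le\epsilon|I|$ — automatic at high energy because bands lengthen in proportion to the perturbative errors, and at bounded energies the finite statement just made.

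To finish, given $\tau\in(0,1)$ set $\epsilon=1-\tau$, take $N,\delta_0$ as above, and shrink $\delta_0$ further so that $2\delta_0$ is less than the shortest band of $S_N$ in the window under consideration. Fix $x\in\sigma(H_V)$ and $0<\delta\le\delta_0$, and let $B$ be the band of $S_N$ containing $x$. Since $|B|>2\delta$, at least one of the one-sided intervals $(x-\delta,x)$, $(x,x+\delta)$ lies inside $B$; say $(x-\delta,x)\subseteq B$. Then $|(x-\delta,x)\setminus\sigma(H_V)|\le\epsilon\delta$, so
\[
|B_\delta(x)\cap\sigma(H_V)|\ \ge\ |(x-\delta,x)\cap\sigma(H_V)|\ \ge\ (1-\epsilon)\delta\ =\ \tau\delta,
\]
and symmetrically if $(x,x+\delta)\subseteq B$. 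Hence $\sigma(H_V)$ is $\tau$-homogeneous.

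The step I expect to be the genuine obstacle is not any single estimate but securing the \emph{uniformity} of all of them. The perturbative bounds (i) and (ii) degrade with energy, and, crucially, each individual $\sigma(H_{V_n})$ is itself inhomogeneous — it can exhibit anomalously short bands at low energy, for instance arising from deep-well cells — so homogeneity of $\sigma(H_V)$ can only emerge in the limit. The two mechanisms that rescue the argument, namely that high-energy bands of $\sigma(H_{V_n})$ lengthen at exactly the rate needed to absorb the growing errors, and that for each fixed $N$ only finitely many short bands of $S_N$ occur so that $\delta_0$ may simply be chosen small, have to be made quantitative; carrying out the generation-by-generation bookkeeping — how a short band refines under the next step, and how the finitely many comparatively large ``old'' gaps coexist inside a fixed window with the super-exponentially numerous tiny ``new'' ones — is where the bulk of the work lies. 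It is precisely here that the Pastur--Tkachenko condition is used in full strength: it is exactly what makes the weighted sums $\sum_m T_m\|V_{m+1}-V_m\|_{\St}^{1/2}$, and all their relatives, converge, and converge fast enough to beat any prescribed $\epsilon$.
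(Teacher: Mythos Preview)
Your overall architecture---propagate quantitative control through the periodic approximants using the super-exponential decay in \eqref{eq:pt:def}, then pass to the limit---matches the paper's. But several of your intermediate estimates are misstated, and the organizational choice you make creates avoidable difficulties that the paper sidesteps.

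First, the quantitative inputs. Your band-length lower bound $c\min(\rho_m,(1+E)^{1/2}/T_m)$ is too optimistic: the correct estimate (Corollary~\ref{coro:breakct}) carries an exponential penalty $e^{-CT_m}$ and a factor $T_m^{-3}$, coming from the discriminant bound in Lemma~\ref{l:discderivest}. This matters because the paper's scale-matching argument (equation~\eqref{deltachoice}) hinges on the precise form of this lower bound. Conversely, your new-gap estimate $C_K\|V_{n+1}-V_n\|_{\St}^{1/2}$ is too pessimistic: band edges are eigenvalues of $H^{\per}$ on $[0,2T_{n+1}]$, and Theorem~\ref{Tbandstability} gives \emph{linear} stability in $\|V_{n+1}-V_n\|_{\B}$, with an explicit polynomial prefactor in $T_{n+1}$ and $|E|^{1/2}$. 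Both errors are harmless under the PT condition, but they indicate you have not actually carried out the perturbation theory.

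Second, your claim $\sigma(H_V)=\bigcap_n S_n$ with $S_n=\sigma(H_{V_n})+[-\eta_n,\eta_n]$ requires two-sided control of $\dist_{\mathrm H}(\sigma(H_V),\sigma(H_{V_n}))$. Since $V_n\to V$ only in the Stepanov norm, not uniformly, this is not automatic. The paper instead uses norm-resolvent convergence (Lemma~\ref{l:ptspec:conv}) to obtain a one-sided containment $\Sigma_j\cap I_j\subseteq B_\varepsilon(\Sigma\cap I)$, which yields the semicontinuity $|I\cap\Sigma|\ge\limsup_j|I_j\cap\Sigma_j|$---and that is all one needs.

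Third, and most importantly, the paper avoids the ``generation-by-generation bookkeeping'' you flag as the main obstacle by proving the homogeneity inequality \eqref{eq:sbs:homog:est} for each $\Sigma_N$ \emph{uniformly in $N$}, and only then passing to the limit via Lemma~\ref{l:ptspec:conv}. The device that makes this clean is a scale-matching: given $x\in\Sigma_N$ and $\delta$, choose the unique $n\le N$ for which the bands of $\Sigma_n$ near $x$ have length comparable to $\delta$ (this is \eqref{deltachoice}); then a single interval $I_0\subseteq B_\delta(x)\cap\Sigma_n$ of length $\tfrac{2+\tau}{3}\delta$ exists, and one bounds $\sum_{\ell=n}^{N-1}|I_0\cap(\Sigma_\ell\setminus\Sigma_{\ell+1})|$ directly, using that $I_0$ contains at most $\delta K s^{-1}T_{\ell+1}^3 e^{KT_{\ell+1}}$ bands of $\Sigma_{\ell+1}$. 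The resulting tail is exactly \eqref{eq:smalltail}, and the ``anomalously short band'' problem disappears because $\delta_0$ is simply set below the minimum band length of $\Sigma_1$. Your nested-$S_n$ approach can be made to work, but it obliges you to track $\sigma(H_V)$ throughout and to resolve the uniformity issues you yourself identify; the paper's route is shorter.
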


As a consequence of Theorem~\ref{t:pt}, we easily recover a continuum Schr\"odinger analog of a strengthened version of the main result of \cite{F14}.

\begin{coro} \label{coro:homogdense}
For each $0 < \tau < 1$, denote by $\mathcal H_\tau$ the set of $V \in \LP(\R)$ such that $H_V$ has purely absolutely continuous spectrum and $\sigma(H_V)$ is a $\tau$-homogeneous Cantor set. Then $\mathcal H_{1^-} = \bigcap_{0<\tau<1}\mathcal H_\tau$ is dense in $\LP(\R)$ \rm{(}with respect to the topology induced by $\| \cdot \|_{\St}${\rm)}.
\end{coro}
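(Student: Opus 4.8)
The plan is to produce a single set $\mathcal D \subseteq \LP(\R)$ that is dense in the Stepanov topology and is contained in $\mathcal H_\tau$ for \emph{every} $\tau \in (0,1)$ at once; then $\mathcal D \subseteq \mathcal H_{1^-}$ and density of $\mathcal H_{1^-}$ follows. I would take
\[
\mathcal D = \{\, V \in \PT(\R) : \sigma(H_V)\text{ is a Cantor set} \,\}.
\]
For $V \in \mathcal D$, Theorem~\ref{t:pt} already says $\sigma(H_V)$ is $\tau$-homogeneous for all $\tau \in (0,1)$, so $V \in \mathcal H_{1^-}$ will follow once we know two things: that every $V \in \PT(\R)$ has purely absolutely continuous spectrum, and that $\mathcal D$ is dense in $\LP(\R)$.

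For the first point: any $V \in \LP(\R)$ sits in an ergodic family via its hull, and it is a standard fact for limit-periodic potentials that $H_V$ is then reflectionless on $\sigma(H_V)$ (zero Lyapunov exponent on the spectrum together with Kotani theory). By Theorem~\ref{t:pt} the spectrum is homogeneous, hence a Widom set with the direct Cauchy property, so the theory of Sodin--Yuditskii (and Gesztesy--Yuditskii) relating reflectionless operators on homogeneous spectra to absolutely continuous spectrum applies and forces purely a.c.\ spectrum; alternatively, purely a.c.\ spectrum for this class is already contained in \cite{PT1, PT2}. In either case this is automatic on $\PT(\R)$, so for $V \in \PT(\R)$ the condition ``$V \in \mathcal H_\tau$ for all $\tau < 1$'' is equivalent to ``$\sigma(H_V)$ is a Cantor set,'' i.e.\ to $V \in \mathcal D$.

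For the density of $\mathcal D$: periodic potentials belong to $\PT(\R)$ (take $V_n \equiv V$ in \eqref{eq:pt:def}) and are dense in $\LP(\R)$ by definition, but their spectra are finite unions of bands, not Cantor sets, so a genuine perturbative construction is needed. Given $V \in \LP(\R)$ and $\varepsilon > 0$, I would fix a periodic $V_0$ of period $T_0$ with $\|V - V_0\|_{\St} < \varepsilon/2$ and set $W = V_0 + \sum_{k\ge1} W_k$, where $W_k$ is periodic of period $T_k$, with $T_{k-1}\mid T_k$, $T_k\uparrow\infty$, $\sum_k\|W_k\|_{\St}<\varepsilon/2$, and $\|W_k\|_{\St}$ decaying fast enough relative to $T_{k+1}$ that $W\in\PT(\R)$. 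Then $\|W-V\|_{\St}<\varepsilon$, and since the $T_k$-periodic truncations of $W$ converge to $W$ in $\|\cdot\|_{\St}$, the periodic spectra $\sigma(H_{V_0+W_1+\cdots+W_k})$ converge to $\sigma(H_W)$ in the Hausdorff metric. The heart of the matter is to choose the $W_k$ so that $\sigma(H_W)$ has empty interior: the spectrum of a $T_k$-periodic operator is $\{E:|\Delta_k(E)|\le 2\}$ with band edges at the periodic and antiperiodic eigenvalues; the ``new'' such eigenvalues appearing on passing from period $T_{k-1}$ to period $T_k$ become dense in $\sigma(H_{V_0})$ as $k\to\infty$; and a generic arbitrarily small periodic perturbation $W_k$ opens a gap at each point of a prescribed finite family of these new eigenvalues (analytic perturbation theory for the discriminant, or the implicit function theorem applied to the (anti)periodic eigenvalue equations). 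Once opened, such a gap has a definite width that is only negligibly perturbed by the exponentially smaller later terms $W_{k+1},W_{k+2},\dots$, hence survives in the Hausdorff limit; so arranging at stage $k$ for the chosen finite family to contain a $1/k$-net of $\sigma(H_{V_0})$ makes the collection of open gaps of $\sigma(H_W)$ dense, i.e.\ $\sigma(H_W)$ is nowhere dense. As $\sigma(H_W)$ is also closed, nonempty, and $\tau$-homogeneous for every $\tau<1$ (hence perfect), it is a $\tau$-homogeneous Cantor set for every $\tau<1$, so $W\in\mathcal D$ with $\|W-V\|_{\St}<\varepsilon$, proving density.

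I expect the main obstacle to be precisely this last construction: one has to simultaneously reconcile membership $W\in\PT(\R)$ (which forces super-exponential smallness of $\|W_k\|_{\St}$ relative to $T_{k+1}$), persistence of each gap opened at stage $k$ under \emph{all} later perturbations, and $\|W-V\|_{\St}<\varepsilon$; and one has to make quantitative, in the continuum setting, the assertion that a small periodic perturbation opens a prescribed finite collection of gaps --- which requires locating and separating the relevant (anti)periodic eigenvalues, whose bands may be long near $+\infty$. These are essentially the estimates carried out in the discrete case in \cite{F14} (compare also \cite{Chul81, MC84}); their adaptation to continuum Schr\"odinger operators under the Pastur--Tkachenko growth restriction is where the real work lies, while everything else is immediate from Theorem~\ref{t:pt} and the ``reflectionless $+$ homogeneous $\Rightarrow$ a.c.'' principle.
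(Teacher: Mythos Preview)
Your overall structure matches the paper's: define $\mathcal D = \{V \in \PT(\R) : \sigma(H_V)\text{ is Cantor}\}$, invoke Theorem~\ref{t:pt} for homogeneity and \cite{PT1,PT2} for purely a.c.\ spectrum, and show $\mathcal D$ is Stepanov-dense. The substantive difference is in how density of $\mathcal D$ is obtained. The paper does \emph{not} build Cantor-spectrum potentials by an iterative gap-opening construction; instead it appeals to the Marchenko--Ostrovski parametrization of the Pastur--Tkachenko class in \cite[Theorem~17.2]{PasturFigotin}, where the spectrum is encoded by heights $|\kappa_r|$ and all gaps are open precisely when all $|\kappa_r|>0$. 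In those coordinates the density of $\{\text{all }|\kappa_r|>0\}$ is immediate, so the paper's proof is a few lines. Your hands-on route is viable in principle and more self-contained, but --- as you yourself flag --- it would require carrying out continuum analogs of the quantitative gap-opening and gap-persistence estimates from \cite{F14}; the paper avoids all of that by leveraging inverse spectral machinery already established for $\PT(\R)$.

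One caution on a side claim: it is \emph{not} a ``standard fact'' that every $V \in \LP(\R)$ is reflectionless on its spectrum via zero Lyapunov exponent and Kotani theory --- there are limit-periodic operators with positive Lyapunov exponent and purely singular spectrum. Your fallback of citing \cite{PT1,PT2} for purely a.c.\ spectrum on $\PT(\R)$ is correct and is exactly what the paper does; if you want reflectionlessness of $\PT$ potentials specifically, the paper obtains it from the arguments in \cite[Section~17]{PasturFigotin} rather than from a general Kotani principle for $\LP(\R)$.
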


\begin{proof}
It is easy to see that $\PT(\R)$ is dense in $\LP(\R)$. In \cite[Theorem 17.2]{PasturFigotin}, the spectrum is encoded in terms of Marchenko--Ostrovski heights $\lvert \kappa_r\rvert$ and it is easy to see that for a dense set of $V \in \PT(\R)$, all $\lvert \kappa_r \rvert$ are strictly positive, and therefore all gaps are open and the spectrum is a Cantor set. By Theorem~\ref{t:pt} and \cite{PT1,PT2}, $\PT(\R) \subseteq \mathcal H_{1^-}$. Thus, $\mathcal H_{1^-}$ is dense in $\LP(\R)$, as desired.
\end{proof}

\noindent \textit{Remark.} Theorem~\ref{t:pt} and Corollary~\ref{coro:homogdense} improve the results of \cite{F14} in two senses. First, for each fixed $\tau < 1$, \cite{F14} constructs a dense set of examples with $\tau$-homogeneous spectrum, while the previous result constructs a dense set which works for all $\tau<1$ simultaneously. Secondly, \cite{F14} does not construct explicit examples of aperiodic limit-periodic potentials with homogeneous spectrum, while the Pastur--Tkachenko class is very explicit.
\bigskip 

The motivation for studying spectral homogeneity arises from inverse spectral problems in the regime of absolutely continuous (a.c.)\ spectrum. Specifically, it is well-known that the presence of nontrivial a.c.\ spectrum in one dimension places rather strict conditions on the potential; compare \cite{kotani89, remling2011}. This led to the popularity of the following conjecture:

\begin{conj}[Kotani--Last]
If an ergodic Schr\"odinger operator has nonempty absolutely continuous spectrum, then the potential is  almost-periodic.
\end{conj}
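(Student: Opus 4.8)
The final statement is the Kotani--Last conjecture; a fully unconditional proof is not available (indeed, as I explain below, the conjecture as literally stated turns out to be false), so the realistic goal is to isolate spectral hypotheses under which the implication holds and prove it there. The plan is to combine Kotani theory with the inverse spectral theory of reflectionless operators. The first step invokes Kotani's theorem \cite{kotani89}: for an ergodic family $H_\omega$, the absolutely continuous spectrum coincides, up to essential closure, with the set $Z = \{E : \gamma(E) = 0\}$ on which the Lyapunov exponent $\gamma$ vanishes, and for $\mu$-a.e.\ $\omega$ the operator $H_\omega$ is \emph{reflectionless} on $\Sigma_{\mathrm{ac}}$, meaning that the boundary values of the half-line Weyl $m$-functions satisfy $m_+(E+i0) = -\overline{m_-(E+i0)}$ for Lebesgue-a.e.\ $E \in \Sigma_{\mathrm{ac}}$. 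Thus nonempty a.c.\ spectrum already forces a rigid, reflectionless structure on a positive-measure set.

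The second step is to convert reflectionlessness into almost periodicity through the inverse spectral theory of finite- and infinite-gap operators. When $\Sigma_{\mathrm{ac}}$ is a finite union of intervals, the reflectionless operators with that spectrum form a torus of classical finite-gap (algebro-geometric) potentials, which are quasi-periodic and hence almost periodic. In the infinite-gap regime one needs quantitative regularity of the spectrum: if $\Sigma_{\mathrm{ac}}$ is homogeneous in the sense of Carleson and satisfies a Widom/finite-gap-length condition, then the work of Sodin--Yuditskii and Gesztesy--Yuditskii shows that the Dirichlet data of a reflectionless operator range over a compact (generically infinite-dimensional) torus, that the associated Abel map is a homeomorphism, and that every such potential is almost periodic. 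Feeding the reflectionless operator from Step~1 into this machinery would then yield almost periodicity and close the argument under these spectral hypotheses.

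The hard part—and the reason the conjecture is so delicate—is that for a general ergodic operator one has no a priori control over the geometry of $\Sigma_{\mathrm{ac}}$: it may be a positive-measure Cantor set that is neither homogeneous nor of finite gap length, in which case the isospectral set of reflectionless potentials need not consist of almost periodic functions, and Step~2 has no input. Remling's oracle theorem \cite{remling2011} sharpens Step~1 by forcing every right limit of the potential to be reflectionless on $\Sigma_{\mathrm{ac}}$, yet even this strong rigidity does not upgrade reflectionlessness to almost periodicity in the absence of spectral regularity. This gap is genuine: the construction of Avila disproves the conjecture as literally stated, producing ergodic potentials with purely a.c.\ spectrum that are \emph{not} almost periodic. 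Consequently the theorem one can realistically prove is conditional—nonempty a.c.\ spectrum together with homogeneity and finiteness of the gap length implies almost periodicity—which is precisely why Theorem~\ref{t:pt}, establishing Carleson homogeneity for the Pastur--Tkachenko class, supplies the crucial structural input that makes the Gesztesy--Yuditskii conclusion available for a dense family of limit-periodic operators.
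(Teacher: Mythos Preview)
The paper does not prove this statement; it records the Kotani--Last conjecture and immediately notes that it is \emph{false}, citing the counterexamples of Avila \cite{avilaJAMS} and of Damanik--Volberg--Yuditskii \cite{DamYud,VolYud}. Your proposal correctly recognizes this and, like the paper, explains the conditional route (Kotani reflectionlessness plus Sodin--Yuditskii/Gesztesy--Yuditskii inverse theory under homogeneity and finite gap length) and its connection to Theorem~\ref{t:pt}; there is nothing further to compare.
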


The conjecture is false -- Avila constructed counterexamples \cite{avilaJAMS}, while Damanik, Volberg and Yuditskii constructed counterexamples using different techniques \cite{DamYud, VolYud}. 

On the other hand, if $\Sigma$ is a homogeneous closed subset of $\R$ with finite gap length, then the space of potentials which have spectrum $\Sigma$ and are reflectionless thereupon is known to consist of almost-periodic operators by a theorem of Sodin and Yuditskii \cite{SY95}; moreover, Gesztesy and Yuditskii have proved that the spectral measures of reflectionless Schr\"odinger operators with homogeneous spectrum are purely absolutely continuous \cite{GY06}. There are analogous results for the inverse spectral theory of Jacobi and CMV matrices in \cite{poltrem09, SY97} and \cite{GZ09}, respectively. One should note that results of this form do not always hold if the spectrum is not homogeneous; compare \cite{DamYud, VolYud, Yud2012}. The inverse spectral results merely require that $\overline{\C} \setminus \Sigma$ is a Widom domain with a direct Cauchy theorem. Homogeneity of $\Sigma$ is a pleasantly explicit sufficient condition which guarantees that this happens; see \cite{Yud2012}.

In light of this, one wonders what kinds of restrictions the presence of absolutely continuous spectrum places on the spectrum itself.

\begin{quest}
If $V$ is almost-periodic, and $H_V$ has nonempty absolutely continuous spectrum, must $\sigma(H_V)$ be homogeneous?
\end{quest}

Although this is a question about inverse spectral theory, one can attempt to study it from a direct spectral perspective by seeing whether or not examples known to have a.c.\ spectrum also have homogeneous spectrum, which is what we do here in the limit-periodic class. For results in this vein in the setting of analytic quasi-periodic operators, see \cite{DGL,GDSV}.

\bigskip

Questions about absolutely continuous spectrum in the one-dimensional ergodic case are closely tied to reflectionlessness, whose definition we briefly recall.

\begin{defi}
Suppose $V \in L^1_{\loc}(\R)$ is such that $H_V$ is in the limit-point case at $+\infty$ and $-\infty$. Then, for any $z \in \C \setminus \R$, there exist $\psi_\pm = \psi_\pm(x,z)$ which solve the initial value problem
$$
-\psi'' + V \psi = z\psi,
\quad
\psi(0,z) = 1
$$
and such that $\psi_\pm(\cdot,z)$ is $L^2$ near $\pm \infty$. The Weyl $m$-functions are given by
$$
m^\pm(x,z)
=
\mp
\left( \frac{\psi_\pm'(x,z)}{\psi_\pm(x,z)} \right).
$$
One says that $V$ is \emph{reflectionless} on $\Sigma \subseteq \R$ if
$$
m^+(x, E + i0)
=
-\overline{m^-(x,E+i0)}
$$
for all $x \in \R$ and Lebesgue almost-every $E \in \Sigma$, and one simply calls $V$ \emph{reflectionless} if one may take $\Sigma = \sigma(H_V)$.
\end{defi}

While the work of Pastur--Tkachenko does not explicitly conclude that potentials in $\PT(\R)$ are reflectionless, this follows readily from their work. For instance, denoting Weyl solutions of $H_V$ at $z\notin \sigma(H_V)$ by $\psi_\pm(\cdot, z)$, with the normalization $\psi_\pm(0,z)=1$, it follows from the arguments in \cite[Section~17]{PasturFigotin} that for almost every $E \in \sigma(H_V)$, the boundary values $\psi_\pm(x,E+i0)$ exist and obey
\begin{equation}\label{eq:pmsymmetry}
\psi_-(x,E+i0) = \overline{\psi_+(x,E+i0)}
\end{equation}
(since these limits are also limits of Weyl solutions for the periodic approximants, and those obey the condition \eqref{eq:pmsymmetry}), and property \eqref{eq:pmsymmetry} implies that $H_V$ is reflectionless. For different characterizations of reflectionlessness, see Breuer--Ryckman--Simon~\cite{BRS09}, Jak\v si\' c--Landon--Panati~\cite{JLP14}.

Potentials which satisfy a Pastur--Tkachenko condition are Stepanov almost-periodic, but not necessarily uniformly almost periodic (i.e.\ a uniform limit of trigonometric polynomials). By combining our theorem with work of Gesztesy-Yuditskii \cite{GY06}, we harvest the following interesting corollary:

\begin{coro}
Given $V \in \PT(\R)$, denote
$$
\Sigma
=
\sigma(H_V)
=
[E_0,\infty) \setminus \bigcup_j (a_j, b_j).
$$
If $V$ is not uniformly almost periodic {\rm(}and in particular, if $V$ is not continuous{\rm)}, then the spectrum has infinite gap length, i.e.
$$
\sum_j (b_j - a_j)
=
\infty.
$$
\end{coro}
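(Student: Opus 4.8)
The plan is to prove the contrapositive: assuming $\Sigma$ has finite gap length, $\sum_j (b_j - a_j) < \infty$, I will show that $V$ must be uniformly almost periodic. Two facts are already available. First, by Theorem~\ref{t:pt}, $\Sigma$ is $\tau$-homogeneous for every $\tau \in (0,1)$; in particular it is homogeneous. Second, as recorded in the discussion around \eqref{eq:pmsymmetry}, every $V \in \PT(\R)$ is reflectionless on $\Sigma = \sigma(H_V)$. So under the standing assumption we are precisely in the situation of a Schr\"odinger operator that is reflectionless on a homogeneous set of finite total gap length and whose spectrum equals that set.

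In that situation the inverse spectral theory is rigid. Homogeneity of $\Sigma$ together with finiteness of $\sum_j (b_j - a_j)$ guarantees that $\overline{\C} \setminus \Sigma$ is a Widom domain obeying the direct Cauchy theorem: homogeneity furnishes the pointwise bound on the Green's function $G_\Sigma(\cdot,\infty)$ near $\Sigma$ needed to control the critical values $G_\Sigma(c_j,\infty)$ (one per gap $(a_j,b_j)$), while finiteness of the gap length makes $\sum_j G_\Sigma(c_j,\infty)$ converge. On such a domain, the theorem of Sodin--Yuditskii~\cite{SY95} (see also \cite{GY06}) asserts that every Schr\"odinger operator reflectionless on $\Sigma$ with spectrum exactly $\Sigma$ has uniformly almost periodic potential -- these potentials fill out a torus of uniformly almost periodic functions parametrized by characters of $\pi_1(\overline{\C} \setminus \Sigma)$. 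Since $V$ is one of these operators, $V$ is uniformly almost periodic, contradicting the hypothesis. The parenthetical ``in particular'' then follows because a uniformly almost periodic function, being a uniform limit of trigonometric polynomials, is continuous; hence a discontinuous $V$ cannot be uniformly almost periodic and the conclusion $\sum_j (b_j - a_j) = \infty$ applies to it.

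The one point requiring care -- and where I expect the argument to need the most attention -- is a matching of regularity classes: a generic $V \in \PT(\R)$ is only assumed Stepanov almost periodic and locally $L^2$, possibly unbounded or discontinuous, whereas the Sodin--Yuditskii framework is usually phrased for bounded (indeed continuous) potentials. This is not a genuine obstruction: once $\Sigma$ has finite gap length, the reflectionless potentials admitted by the Gesztesy--Yuditskii trace-formula representation are automatically bounded and continuous, so reflectionlessness of $V$ on such a $\Sigma$ already places $V$ in the regular class and the two descriptions are consistent. (Equivalently, one may first invoke \cite{GY06} to obtain purely absolutely continuous spectrum together with the Widom structure, and then read off uniform almost periodicity.) Beyond this bookkeeping, the corollary is an immediate consequence of Theorem~\ref{t:pt} and the cited inverse spectral results.
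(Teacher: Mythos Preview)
Your proof is correct and follows essentially the same route as the paper: argue by contraposition, use Theorem~\ref{t:pt} for homogeneity and the discussion around~\eqref{eq:pmsymmetry} for reflectionlessness, then invoke the inverse spectral result to conclude uniform almost periodicity. The only difference is packaging: the paper cites \cite[Theorem~2.5]{GY06} directly as a black box, whereas you unpack the Widom/DCT mechanism behind it and flag the regularity-matching issue, but the logical skeleton is identical.
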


\begin{proof}
By way of contraposition, assume $\Sigma$ has finite gap length. Since $V$ is reflectionless by the foregoing remarks and $\Sigma$ is homogeneous by Theorem~\ref{t:pt}, then \cite[Theorem~2.5]{GY06} implies that $V$ is uniformly almost periodic.
\end{proof}

The organization of the paper is as follows. In Section~\ref{sec:floq}, we recall some relevant aspects of Floquet Theory for Hill's equation. In particular, we prove an estimate on the discriminant which gives us a useful lower bound on the length of a band of the spectrum of a periodic potential. Next, in Section~\ref{sec:bandstab}, we discuss the stability of periodic spectra under $L^2$ perturbations. These stability estimates replace the $L^\infty$ estimates that are exploited in \cite{F14}, and they comprise the key ingredient in proving effective step-by-step estimates on the measure of the spectra of periodic approximants. In Section~\ref{sec:specconv}, we describe the sense in which the periodic spectra converge to the spectrum associated with $V \in \PT(\R)$, which enables us to push the step-by-step estimates through to the limit.  Section~\ref{sec:proofs} combines these ingredients and proves Theorem~\ref{t:pt}. Finally, we discuss the extension of Theorem~\ref{t:pt} to Jacobi and CMV matrices in Section~\ref{sec:discrete}.

\section*{Acknowledgements}

J.\ F.\ thanks Quentin Funk for helpful conversations.

\section{Floquet-Bloch Theory for Periodic Potentials}\label{sec:floq}

Suppose $V \in L^2_{\loc}(\R)$ is $T$-periodic. Given $z \in \C$, let $y_D$ and $y_N$ denote the Dirichlet and Neumann solutions of the Schr\"odinger equation
\begin{equation} \label{eq:so}
-y'' + Vy = zy.
\end{equation}
Specifically, $y_D$ and $y_N$ solve \eqref{eq:so} subject to the initial conditions
$$
y_D(0) = y_N'(0) = 0,
\quad
y_D'(0) = y_N(0) = 1.
$$
The \emph{monodromy matrix} is defined by
\[
M_z = \begin{pmatrix}
y_N(T) & y_D(T) \\
y_N'(T)  & y_D'(T)
\end{pmatrix}
\]
and the \emph{discriminant} is given by $\Delta(z) = \tr(M_z)$. One has
$$
\sigma(H_V)
=
\{ E \in \R : |\Delta(E)| \leq 2 \}
=
\bigcup_{j=1}^\infty [\alpha_j,\beta_j],
$$
where $\alpha_1 < \beta_1 \leq \alpha_2 < \cdots$ denote the solutions of $\Delta = \pm 2$. We call the intervals $[\alpha_j, \beta_j]$ the \emph{bands} of the spectrum, while the intervals $(\beta_j, \alpha_{j+1})$ are called \emph{gaps}. Whenever $\beta_j = \alpha_{j+1}$, we say that the $j$th gap is \emph{closed}. Otherwise, $\beta_j < \alpha_{j+1}$ and the $j$th gap is said to be \emph{open}. The discriminant is strictly monotone on each band.

We prove an estimate on the discriminant, and the corresponding estimate on band lengths. These estimates are likely well-known, but we could not find a precise reference.

\begin{lemma} \label{l:discderivest}
 Let $V \in L^2_\loc(\R)$ be a $T$-periodic potential with corresponding discriminant $\Delta$, and let $E_0 = \min (0, \inf \sigma(H_V))$. Then
\begin{equation}\label{derivative}
\lvert \Delta'(E) \rvert 
\le 
\frac{C T^3}{T + \lvert E \rvert^{1/2}}  \exp\left( CT \left( \lVert V \rVert_{\B}^{1/2} +  \lvert E_0\rvert^{1/2} \right) \right)
\end{equation}
for all $E\in \sigma(H_V)$, for some universal constant $C > 0$ which does not depend on $E$, $T$, or $V$.
\end{lemma}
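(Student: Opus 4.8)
The plan is to represent $\Delta'(E)$ by a variation-of-parameters formula for the $E$-derivative of the monodromy matrix and to control the resulting integral after conjugating the transfer-matrix cocycle by a diagonal matrix whose scale is adapted simultaneously to the energy, to $\|V\|_{\B}$, and to $T$. Write $\Delta(E) = \tr\Phi(T,E)$, where $\Phi(\cdot,E)$ solves $\Phi' = A(x,E)\Phi$, $\Phi(0,E)=I$, with $A(x,E) = \left(\begin{smallmatrix}0&1\\V-E&0\end{smallmatrix}\right)$, so that $\partial_E A = -N$ for $N = \left(\begin{smallmatrix}0&0\\1&0\end{smallmatrix}\right)$. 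Fix $E\in\sigma(H_V)$ and set
\[
\kappa = \kappa(E) = (E-E_0)^{1/2} + \|V\|_{\B}^{1/2} + |E_0|^{1/2} + T^{-1},
\]
a well-defined positive number since $E\ge E_0$ on $\sigma(H_V)$. Conjugating by the \emph{constant} matrix $P = \mathrm{diag}(\kappa,1)$, the matrix $\tilde\Phi(x,E) := P\,\Phi(x,E)\,P^{-1}$ solves $\tilde\Phi' = (\kappa J + b_E(x)N)\tilde\Phi$ with $J = \left(\begin{smallmatrix}0&1\\-1&0\end{smallmatrix}\right)$ and $b_E(x) = \kappa^{-1}(V(x)-E+\kappa^2)$, and (with $\kappa$, hence $P$, held fixed) $\partial_E(\kappa J + b_E(x)N) = -\kappa^{-1}N$, which is constant in $x$. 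Since $e^{x\kappa J}$ is a rotation, factoring it out and applying Gr\"onwall's inequality bounds $\|\tilde\Phi(s,E)^{\pm1}\| \le \exp\!\big(\int_0^T|b_E(t)|\,dt\big)$ for all $s\in[0,T]$. As $\tr$ and the cocycle relation survive the conjugation, variation of parameters yields $\Delta'(E) = \tr\,\partial_E\tilde\Phi(T,E) = -\kappa^{-1}\tr\!\big(\tilde\Phi(T,E)\int_0^T\tilde\Phi(s,E)^{-1}N\,\tilde\Phi(s,E)\,ds\big)$, and hence
\[
|\Delta'(E)| \le \frac{2T}{\kappa}\,\exp\!\Big(3\int_0^T|b_E(t)|\,dt\Big).
\]

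It remains to estimate the two factors using the precise shape of $\kappa$. For the exponent, $T$-periodicity of $V$ gives $\int_0^T|V|^2 = T\|V\|_{\B}^2$, so Cauchy--Schwarz yields $\int_0^T|V|\le T\|V\|_{\B}$; combined with $\kappa\ge\|V\|_{\B}^{1/2}$ this becomes $\kappa^{-1}\int_0^T|V|\le T\|V\|_{\B}^{1/2}$ --- precisely the step that replaces $\|V\|_{\B}$ by its square root in~\eqref{derivative}. Writing $\rho = \|V\|_{\B}^{1/2}+|E_0|^{1/2}+T^{-1}$, so that $\kappa = (E-E_0)^{1/2}+\rho$, one computes $\kappa^2-E = |E_0| + 2\rho(E-E_0)^{1/2} + \rho^2 \ge 0$, and then $(E-E_0)^{1/2}\le\kappa$ together with $\kappa\ge\rho\ge|E_0|^{1/2}$ gives $T\kappa^{-1}|\kappa^2-E| \le C\,T(\|V\|_{\B}^{1/2}+|E_0|^{1/2}) + C$; altogether $\int_0^T|b_E| \le C\,T(\|V\|_{\B}^{1/2}+|E_0|^{1/2}) + C$. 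For the prefactor, on $\sigma(H_V)$ one has $|E|\le\max(E-E_0,\,|E_0|)\le\kappa^2$, hence $\kappa\ge\max(|E|^{1/2},T^{-1})$, from which $T/\kappa \le C\,T^3/(T+|E|^{1/2})$ (this last inequality uses $T\ge1$, which is no loss in the application, where the periods $T_n\to\infty$). Feeding the two bounds into the displayed inequality gives~\eqref{derivative}.

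The crux --- the one step that is not bookkeeping --- is the choice of $\kappa(E)$: it must grow like $|E|^{1/2}$ at high energy, so as to defeat the exponential-in-$|E|^{1/2}$ growth that a naive Gr\"onwall estimate would give for the transfer matrix over $[0,T]$, and it must be bounded below by $\|V\|_{\B}^{1/2}$, $|E_0|^{1/2}$, and $T^{-1}$ --- the first to convert the $L^1$-average of $|V|$ over a period into its square root in the exponent, the other two to handle the low-energy regime and the prefactor --- while remaining tied to $E$ and $V$ tightly enough that $V-E+\kappa^2$ has small $L^1$-norm on a period. Once such a $\kappa$ is in hand, the rest is Gr\"onwall's inequality together with the elementary estimates indicated above.
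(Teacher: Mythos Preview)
Your argument is correct and takes a genuinely different route from the paper's. The paper first quotes the bound $|\Delta(\mu^2)| \le C\exp(CQ^{1/2} + \pi|\Im\mu|)$ from \cite[Corollary~17.8]{PasturFigotin} for the special period $T=\pi$, then applies the Cauchy differentiation formula on small circles to pass from $\Delta$ to $\Delta'$, and finally rescales to arbitrary $T$ via $V_\pi(x)=(T/\pi)^2V(Tx/\pi)$. You instead work directly with the first-order system: the variation-of-parameters identity for $\partial_E\Phi$ gives $\Delta'$ as an integral, and the conjugation by $\mathrm{diag}(\kappa,1)$, with your specific choice of $\kappa$, is what extracts the correct energy and $\|V\|_{\B}^{1/2}$ dependence that the paper obtains from complex analysis and the cited growth estimate. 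Your proof is self-contained --- the hard step is not outsourced to \cite{PasturFigotin} --- and makes explicit which piece of $\kappa$ is responsible for which feature of the bound; the paper's proof is shorter once that citation is granted, and the Cauchy-formula step handles the derivative without any ODE bookkeeping.

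Your caveat about $T\ge 1$ is honest and not a defect relative to the paper: the same restriction is implicitly present after the rescaling step there (indeed, for a constant potential and small $T$ one checks that \eqref{derivative} cannot hold with a $T$-independent $C$), and it is harmless in the application since the proof of Theorem~\ref{t:pt} begins by reducing to $T_n\ge 1$.
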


\begin{proof}
In this proof, $C$ will stand for different universal constants. Its value will increase only finitely many times over the course of the argument.

We begin by proving this for $T=\pi$. Denote $Q =\lVert V \rVert_{\B}$. It follows from \cite[Corollary 17.8]{PasturFigotin} that
\begin{equation}\label{PFeqn}
\lvert \Delta(\mu^2) \rvert \le C \exp\left( C Q^{1/2} + \pi \lvert \Im \mu \rvert \right).
\end{equation}
From the Cauchy differentiation formula on a circle of radius $1$,  for $f(\mu) = \Delta(\mu^2)$,
\[
\lvert f'(\mu) \rvert \le \int_0^{2\pi} \lvert f(\mu+e^{i\theta}) \rvert \frac{d\theta}{2\pi}
\]
so we conclude that
\begin{equation}\label{PFeqn1}
\lvert 2\mu \Delta'(\mu^2) \rvert \le C \exp\left( C Q^{1/2} + \pi \lvert \Im \mu \rvert + \pi \right).
\end{equation}
We can also conclude from \eqref{PFeqn} that
\[
\lvert \Delta(z) \rvert \le C \exp\left( C Q^{1/2} + 2\pi \right), \qquad \lvert z\rvert \le 2,
\]
and then by the Cauchy differentiation formula for $\Delta(z)$, similarly as above, that
\begin{equation}\label{PFeqn2}
\lvert \Delta'(z) \rvert \le C \exp\left( C Q^{1/2} + 2\pi \right), \qquad \lvert z\rvert \le 1.
\end{equation}
Using \eqref{PFeqn1} for $\lvert z \rvert >1$ and \eqref{PFeqn2} for $\lvert z \rvert \le 1$, we conclude that for all $z\in \mathbb{C}$,
\[
\lvert \Delta'(z) \rvert \le \frac{C}{1 + \lvert z \rvert^{1/2}} \exp\left( C Q^{1/2} + \pi \lvert \Im \sqrt z\rvert \right).
\]
If $E \in \sigma(H_V)$, then $\lvert\Im \sqrt E \rvert \le  \lvert E_0\rvert^{1/2}$, so we conclude that the statement of the lemma holds for $T = \pi$.

For a potential $V$ of arbitrary period $T$, introduce the rescaled $\pi$-periodic potential $V_\pi(x) = (T/\pi)^2 V(Tx/\pi)$. Its discriminant $\Delta_\pi(E)$ obeys $\Delta_\pi((T/\pi)^2 E) = \Delta(E)$, so the conclusion of the lemma for $V$ follows by applying the previous argument to $V_\pi$ and noting that $\lVert V_\pi \rVert_{\B} = (T/\pi)^2 \lVert V \rVert_{\B}$ and that $\nu_0$ scales in the same way.
\end{proof}

\begin{coro} \label{coro:breakct}
Let $C$ denote the universal constant from Lemma~\ref{l:discderivest}, and suppose $V \in L^2_{\loc}(\R)$ is $T$-periodic with $\|V\|_{\B} \leq Q$. Then the length of any band $[\alpha_j,\beta_j]$ of $\sigma(H_V)$ is bounded from below by 
$$
|\beta_j - \alpha_j|
\geq
4 C^{-1} e^{-CT(Q^{1/2}+\lvert E_0\rvert^{1/2})}(T+\lambda_0^{1/2}) T^{-3},
$$
with $E_0$ defined as in Lemma~\ref{l:discderivest} and
$$
\lambda_0
=
\min_{E \in [\alpha_j,\beta_j]} |E|.
$$

\end{coro}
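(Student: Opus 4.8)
The plan is to combine the fundamental theorem of calculus with the derivative bound \eqref{derivative}. The input from Floquet theory is already recorded in this section: on each band $[\alpha_j,\beta_j]$ the discriminant $\Delta$ is $C^1$ (in fact real-analytic) and strictly monotone, and the endpoints $\alpha_j,\beta_j$ are solutions of $\Delta=\pm2$. Hence $|\Delta(\alpha_j)|=|\Delta(\beta_j)|=2$, and since strict monotonicity forbids the two endpoint values from coinciding, one has $\{\Delta(\alpha_j),\Delta(\beta_j)\}=\{+2,-2\}$ and therefore $|\Delta(\beta_j)-\Delta(\alpha_j)|=4$. Writing this difference as an integral of $\Delta'$ gives
\[
4 = \bigl|\Delta(\beta_j)-\Delta(\alpha_j)\bigr|
= \left| \int_{\alpha_j}^{\beta_j} \Delta'(E)\,dE \right|
\le (\beta_j-\alpha_j)\,\sup_{E\in[\alpha_j,\beta_j]} |\Delta'(E)|.
\]

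The second step is to bound the supremum using Lemma~\ref{l:discderivest}. For $E$ in the band we have $|E|\ge \lambda_0$, so $T+|E|^{1/2}\ge T+\lambda_0^{1/2}$, which controls the denominator in \eqref{derivative}; and $\|V\|_{\B}\le Q$ controls the exponential factor. Since $E_0$ in the corollary is defined exactly as in Lemma~\ref{l:discderivest}, this yields
\[
\sup_{E\in[\alpha_j,\beta_j]} |\Delta'(E)|
\le \frac{C T^3}{T+\lambda_0^{1/2}}\,\exp\!\Bigl( CT\bigl(Q^{1/2}+|E_0|^{1/2}\bigr)\Bigr).
\]
Substituting this into the previous display and solving for $\beta_j-\alpha_j$ produces exactly
\[
\beta_j-\alpha_j \ge \frac{4\bigl(T+\lambda_0^{1/2}\bigr)}{C T^3}\,\exp\!\Bigl(-CT\bigl(Q^{1/2}+|E_0|^{1/2}\bigr)\Bigr),
\]
which is the claimed inequality after rearranging the constants.

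I do not anticipate any real obstacle: the corollary is a direct quantitative consequence of Lemma~\ref{l:discderivest}, and the only point worth a remark is the identity $|\Delta(\beta_j)-\Delta(\alpha_j)|=4$, which, as noted above, survives even when an adjacent gap is closed since the argument uses only that $\alpha_j$ and $\beta_j$ are consecutive zeros of $\Delta^2-4$ enclosing a region where $|\Delta|\le2$, together with strict monotonicity of $\Delta$ there.
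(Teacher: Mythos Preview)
Your argument is correct and essentially identical to the paper's proof: the paper simply says it follows from Lemma~\ref{l:discderivest} and the Mean Value Theorem together with $|\Delta(\beta_j)-\Delta(\alpha_j)|=4$, which is exactly what you do (with the fundamental theorem of calculus in place of the Mean Value Theorem, an immaterial distinction).
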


\begin{proof}
This is a consequence of Lemma~\ref{l:discderivest} and the Mean Value Theorem, since $\lvert \Delta(\beta_j) - \Delta(\alpha_j) \rvert = 4$.
\end{proof}

\section{Stability of band edges} \label{sec:bandstab}

An important component of our proof of Theorem~\ref{t:pt} will be control of the stability of bands of periodic Schr\"odinger operators under an $L^2$ perturbation. Band edges of a $T$-periodic Schr\"odinger operator are precisely eigenvalues of the Schr\"odinger operator with periodic boundary conditions on $[0,2T]$. In this section, we prove a stability statement for those eigenvalues under an $L^2$ perturbation (note that, for $L^\infty$ perturbations, this is trivial). Pastur--Figotin \cite{PasturFigotin} contains an estimate of this form for the bottom of the spectrum, but we require an estimate for all eigenvalues. While estimates of this form are classical, we are not aware of this estimate appearing in the literature, so we provide a proof. Note that, for our application, it will be crucial that the estimate below features only the square root, and not a higher power, of the energy.

For $V \in L^2[0,T]$, denote by $H^\per_{V}$ the Schr\"odinger operator $-\Delta +V$ on $L^2[0,T]$ with periodic boundary conditions. Denote by $E_{n,V}$ the $n$-th eigenvalue (including multiplicity) of $H^\per_{V}$; in particular, $E_{1,V}$ is the ground state energy of $H^\per_{V}$. Finally, let $R_{\lambda,V}$ denote the resolvent of $H^\per_V$ at $\lambda \in \mathbb{C} \setminus \sigma(H^\per_V)$. In particular, $R_{\lambda,0}$ denotes the resolvent of the free Schr\"odinger operator $H_0^\per = -\Delta$ on $L^2[0,T]$ with periodic boundary conditions.

\begin{theorem} \label{Tbandstability}
Fix $Q > 0$. If $V_1, V_2 \in L^2[0,T]$ and $\lVert V_j \rVert_{\B} \le Q$ for $j=1$, $2$, then
\[
E_{1,V_j} \ge - C_1(Q + T^2 Q^2)
\]
and
\begin{equation} \label{eq:edgestability}
\lvert E_{n,V_1} - E_{n,V_2} \rvert \le C_1 (1+T^2 Q) (1+ T \lvert E_{n,V_2} \rvert^{1/2}) \lVert V_1 - V_2 \rVert_{\B}
\end{equation}
for some constant $C_1$ independent of $T$, $Q$, $V_1$, $V_2$.
\end{theorem}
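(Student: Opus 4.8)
The plan is to realize both bounds as consequences of resolvent perturbation estimates with explicit constants, the crucial point being to track the $T$- and energy-dependence carefully so that only $|E|^{1/2}$ (and not $|E|$) appears. First I would fix a convenient reference point on the negative real axis: pick $\lambda = -\Lambda$ with $\Lambda = \Lambda(Q,T)$ to be chosen, and estimate the Birman--Schwinger-type operator $|V|^{1/2} R_{-\Lambda,0} |V|^{1/2}$ in operator norm. Since $H_0^\per = -\Delta$ on $[0,T]$ with periodic boundary conditions has eigenvalues $(2\pi k/T)^2$ and explicit exponentially decaying Green's function, one gets $\|R_{-\Lambda,0}\|_{1\to\infty}$ and $\|R_{-\Lambda,0}\|_{2\to\infty}$ bounds of the form $C(\Lambda^{-1/2} + \Lambda^{-1}T^{-1})$ times elementary factors; combined with $\|V\|_{\B} \le Q$ (which controls the relevant $L^1$-type and Hilbert--Schmidt norms of $V$ on a period, up to factors of $T$), this yields $\||V|^{1/2} R_{-\Lambda,0}|V|^{1/2}\| \le \tfrac12$ once $\Lambda \ge C_1(Q + T^2Q^2)$. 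Then $-\Lambda \notin \sigma(H^\per_V)$, which immediately gives $E_{1,V} \ge -C_1(Q+T^2Q^2)$, the first assertion.

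For the second assertion I would use the min-max principle together with a uniform resolvent comparison. The estimate \eqref{eq:edgestability} should be read as: on the scale of the $n$-th eigenvalue, shifting $V_2$ to $V_1$ moves $E_{n,\cdot}$ by at most $\|V_1-V_2\|_{\B}$ times a factor that grows only like $1+T|E_{n,V_2}|^{1/2}$. The mechanism is that the form perturbation $V_1 - V_2$ is bounded relative to $H^\per_{V_2} + \Lambda$ (or to $H_0^\per + \Lambda'$) with a relative bound that is small but, more importantly, with a relative-bound \emph{constant} whose size near energy $E$ is governed by $\|(H_0^\per - E)^{-1}\|$-type quantities evaluated off the spectrum — and these behave like $T/(1+T|E|^{1/2})$ rather than like anything involving $|E|$. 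Concretely: write $E_{n,V_1} - E_{n,V_2}$ via an eigenvalue-perturbation (Feynman--Hellmann / interpolation along $V_t = V_2 + t(V_1-V_2)$) identity, so that the difference is $\int_0^1 \langle \phi_t, (V_1-V_2)\phi_t\rangle\,dt$ for normalized eigenfunctions $\phi_t$ of $H^\per_{V_t}$ at eigenvalue level $n$; then bound $\langle \phi_t,(V_1-V_2)\phi_t\rangle$ by $\|V_1-V_2\|_{\B}$ times $\|\phi_t\|_\infty^2 \cdot T$ (Cauchy--Schwarz on one period, summed over $\lceil$ length $\rceil$ intervals), and control $\|\phi_t\|_\infty$ in terms of $\|\phi_t\|_2 = 1$ and the eigenvalue via a Sobolev/Agmon-type bound: $\|\phi\|_\infty^2 \le C(1+T^2Q)(1/T + |E_{n,V_t}|^{1/2})$ for an eigenfunction at energy $E_{n,V_t}$, using the equation $\phi'' = (V_t - E)\phi$ to bound $\|\phi'\|_2$ and then interpolating. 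Multiplying through produces exactly the factor $(1+T^2Q)(1+T|E_{n,V_2}|^{1/2})$ after replacing $|E_{n,V_t}|$ by $|E_{n,V_2}|$ (legitimate since the two endpoints' eigenvalues are comparable, which one bootstraps from a crude first pass of the same inequality).

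The main obstacle, and the step deserving the most care, is the uniform eigenfunction $L^\infty$ bound $\|\phi\|_\infty^2 \lesssim (1+T^2Q)(T^{-1} + |E|^{1/2})$ with the correct scaling in both $T$ and $E$: this is where the ``only the square root of the energy'' feature is won or lost. The natural route is to scale to the $\pi$-periodic problem as in Lemma~\ref{l:discderivest} (so $T$-dependence becomes bookkeeping), establish there the clean statement $\|\phi\|_\infty^2 \le C(1+Q_\pi)(1+|E_\pi|^{1/2})$ via $\|\phi\|_\infty^2 \le \|\phi\|_2^2 + 2\|\phi\|_2\|\phi'\|_2$ and $\|\phi'\|_2^2 = \langle \phi,(E-V)\phi\rangle \le |E| + Q_\pi$, then note $\max(1,|E_\pi|^{1/2}) \le C(1 + Q_\pi^{1/2} + \dots)$ is \emph{not} automatic — rather one keeps $|E_\pi|^{1/2}$ explicit and only uses the ground-state bound from the first part to absorb the lowest band. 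Undoing the scaling $V \mapsto V_\pi$, $E \mapsto (T/\pi)^2 E$, $\|V_\pi\|_{\B} = (T/\pi)^2\|V\|_{\B}$ converts $(1+Q_\pi)(1+|E_\pi|^{1/2})$ into $(1+T^2Q)(1+T|E|^{1/2})$ up to the universal constant, which is precisely the shape claimed. Everything else — the Birman--Schwinger norm bound, the min-max/interpolation identity, the Cauchy--Schwarz passage from $\|\cdot\|_\infty$ to $\|\cdot\|_{\B}$ — is routine once this scaling-clean eigenfunction estimate is in hand.
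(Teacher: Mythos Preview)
Your approach is essentially correct but genuinely different from the paper's. For the ground-state bound both arguments are the same in spirit (Neumann series for the resolvent once $\lVert V R_{\lambda,0}\rVert$ is small, driven by the $L^2\to L^\infty$ bound on the free periodic resolvent), though the paper works with $VR_{\lambda,0}$ directly rather than the symmetrized Birman--Schwinger operator. The real divergence is in the stability estimate \eqref{eq:edgestability}. The paper never touches eigenfunctions: it stays at the operator level, using the resolvent identity $R_{\lambda,V_1}-R_{\lambda,V_2}=R_{\lambda,V_1}(V_2-V_1)R_{\lambda,V_2}$ together with the $L^2\to L^\infty$ bound on $R_{\lambda,V_2}$ (inherited from the free case via the Neumann series), then applies min--max to the compact positive resolvents to get $\lvert (E_{n,V_1}-\lambda)^{-1}-(E_{n,V_2}-\lambda)^{-1}\rvert\le \lVert R_{\lambda,V_1}-R_{\lambda,V_2}\rVert$, and finally chooses $\lambda=-\max\{2\lvert E_{n,V_2}\rvert,\,AQ+A^2Q^2\}$ to convert this back into an estimate on $\lvert E_{n,V_1}-E_{n,V_2}\rvert$ with exactly the $\lvert E\rvert^{1/2}$ dependence. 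Your route via Feynman--Hellmann and the eigenfunction $L^\infty$ bound is more hands-on and arguably more transparent about where the square root comes from, but it carries two costs: you must handle eigenvalue crossings (the periodic problem has double eigenvalues at closed gaps, so the interpolation formula needs the usual Kato--Rellich or min--max Lipschitz workaround), and your derivation of $\lVert\phi\rVert_\infty^2\lesssim(1+Q_\pi)(1+\lvert E_\pi\rvert^{1/2})$ is not quite as written --- the step ``$\lVert\phi'\rVert_2^2=\langle\phi,(E-V)\phi\rangle\le\lvert E\rvert+Q_\pi$'' is circular since bounding $\langle\phi,V\phi\rangle$ already requires $\lVert\phi\rVert_\infty$; one needs a short bootstrap (e.g.\ $\lvert\langle\phi,V\phi\rangle\rvert\le \lVert V\rVert_{L^2}\lVert\phi\rVert_\infty$ and then solve the resulting inequality for $\lVert\phi\rVert_\infty$). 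With those two wrinkles ironed out your argument goes through and yields the same constants up to absolute factors; the paper's resolvent method simply sidesteps both issues.
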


The proof of the theorem will start with a comparison with the free operator, so our first lemma concerns the integral kernel of the resolvent of $H^\per_0$.

\begin{lemma} For $\lambda \notin \sigma(H^\per_0)$,
\begin{equation} \label{eq:pergreenid}
(R_{\lambda,0} f)(x)= \int_0^T \! G(\lambda;x,t) f(t) \, dt,
\end{equation}
where
\begin{equation} \label{eq:pergreendef}
G(\lambda;x,t) 
= 
- \frac 1{2\sqrt \lambda} \frac{\cos\left(\sqrt \lambda (\frac T2 - \lvert x-t \rvert)\right)}{\sin \left(\sqrt \lambda \frac T2\right)}.
\end{equation}
In particular, there is a constant $C \in (0,\infty)$ {\rm(}depending only on $T${\rm)} such that for all $\lambda < 0$ and all $x, t \in [0,T]$,
\[
\lvert G(\lambda;x,t) \rvert \le CT^{-1/2} (\lvert \lambda \rvert^{-1/2} + \lvert \lambda \rvert^{-1}),
\]
so $R_{\lambda,0}$ is a bounded operator from $L^2[0,T]$ to $L^\infty[0,T]$,
\begin{equation}\label{freeresolvent}
\lVert R_{\lambda,0} \rVert_{2,\infty} 
\le 
C (\lvert \lambda \rvert^{-1/2} + \lvert \lambda \rvert^{-1}).
\end{equation}
\end{lemma}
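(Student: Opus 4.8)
The plan is to identify $G(\lambda;\cdot,\cdot)$ as the periodic Green's function of $-\partial_x^2$ via the standard Sturm--Liouville characterization, then extract the pointwise bound for $\lambda<0$ by converting the trigonometric functions into hyperbolic ones, and finally deduce \eqref{freeresolvent} from a one-line Cauchy--Schwarz estimate on the kernel. For the first step, recall that since $\lambda\notin\sigma(H^\per_0)$ the operator $H^\per_0-\lambda$ is boundedly invertible, and $R_{\lambda,0}$ is the integral operator whose kernel is the unique function $G(\lambda;\cdot,t)$ (for each fixed $t\in(0,T)$) that solves $-\partial_x^2 G-\lambda G=0$ on $(0,T)\setminus\{t\}$, satisfies the periodic boundary conditions $G(\lambda;0,t)=G(\lambda;T,t)$ and $\partial_xG(\lambda;0,t)=\partial_xG(\lambda;T,t)$, is continuous at $x=t$, and has the jump $\partial_xG(\lambda;t^+,t)-\partial_xG(\lambda;t^-,t)=-1$; uniqueness holds because the difference of two such kernels would solve the homogeneous equation subject to periodic boundary conditions, i.e.\ be an eigenfunction at the non-eigenvalue $\lambda$.

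I would then simply verify that the right-hand side of \eqref{eq:pergreendef} enjoys all of these properties. On each of $\{x<t\}$ and $\{x>t\}$ it is a linear combination of $\cos(\sqrt\lambda\,x)$ and $\sin(\sqrt\lambda\,x)$ with $x$-independent coefficients, hence solves $-\partial_x^2 G-\lambda G=0$; at $x=t$ both one-sided expressions equal $-\tfrac{1}{2\sqrt\lambda}\cot(\sqrt\lambda T/2)$, so $G$ is continuous there; differentiating in $x$ and sending $x\to t^{\pm}$ gives the $x$-derivatives $\mp\tfrac12$, hence the jump $-1$; and the evenness of $\cos$ and oddness of $\sin$ make the values and $x$-derivatives at $x=0$ and $x=T$ coincide. (I would also remark that \eqref{eq:pergreendef} is unambiguous despite the explicit $\sqrt\lambda$, because $\cos(\sqrt\lambda\,\cdot)$ and $\sin(\sqrt\lambda\,\cdot)/\sqrt\lambda$, and hence the whole expression, are even functions of $\sqrt\lambda$.) An alternative is to expand $f$ in the basis $\{T^{-1/2}e^{2\pi i n x/T}\}_{n\in\Z}$ and sum the resulting series into closed form, but the ODE check is shorter.

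For the pointwise bound, take $\lambda=-\kappa^2$ with $\kappa>0$; then $\cos(\sqrt\lambda(\tfrac T2-|x-t|))=\cosh(\kappa(\tfrac T2-|x-t|))$ and $\sin(\sqrt\lambda\tfrac T2)=i\sinh(\kappa\tfrac T2)$, so
\[
G(-\kappa^2;x,t)=\frac{\cosh\!\left(\kappa\left(\tfrac T2-|x-t|\right)\right)}{2\kappa\,\sinh(\kappa T/2)}>0 .
\]
Since $x,t\in[0,T]$ forces $|x-t|\in[0,T]$ and hence $\bigl|\tfrac T2-|x-t|\bigr|\le\tfrac T2$, the numerator is at most $\cosh(\kappa T/2)$, so $|G(-\kappa^2;x,t)|\le\tfrac{1}{2\kappa}\coth(\kappa T/2)$. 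Applying the elementary inequality $\coth s\le 1+\tfrac1s$ (equivalently $e^{-s}\le(\sinh s)/s$, which follows from $\sinh s\ge s$) with $s=\kappa T/2$ gives
\[
|G(\lambda;x,t)|\le\frac{1}{2\kappa}+\frac{1}{\kappa^2 T}=\frac12|\lambda|^{-1/2}+\frac1T|\lambda|^{-1},
\]
which is $\le C T^{-1/2}\bigl(|\lambda|^{-1/2}+|\lambda|^{-1}\bigr)$ once the powers of $T$ are absorbed into $C=C(T)$ (the constant genuinely depends on $T$, of order $\max(T^{1/2},T^{-1/2})$, as the statement permits). Then \eqref{freeresolvent} is immediate from Cauchy--Schwarz: for $f\in L^2[0,T]$,
\[
|(R_{\lambda,0}f)(x)|\le\int_0^T|G(\lambda;x,t)|\,|f(t)|\,dt\le\Bigl(\int_0^T|G(\lambda;x,t)|^2\,dt\Bigr)^{1/2}\|f\|_{L^2}\le T^{1/2}\Bigl(\sup_{x,t}|G(\lambda;x,t)|\Bigr)\|f\|_{L^2},
\]
and the factor $T^{1/2}=|[0,T]|^{1/2}$ cancels the $T^{-1/2}$ in the kernel bound — which is precisely why that power is placed there.

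The only step needing genuine care is the verification that \eqref{eq:pergreendef} is the periodic Green's function: getting the unit jump and the two periodic boundary conditions to come out with exactly the right signs and normalization. Everything after that is routine calculus with $\cosh$ and $\coth$ together with a single application of Cauchy--Schwarz.
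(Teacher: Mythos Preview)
Your proof is correct. The paper establishes \eqref{eq:pergreendef} by writing the general solution of $-y''-\lambda y=f$ via variation of parameters, imposing the periodic boundary conditions to solve for the two free constants, and simplifying; you instead \emph{verify} that the proposed kernel satisfies the defining Sturm--Liouville properties (homogeneous ODE away from the diagonal, continuity, unit jump in the derivative, periodic boundary conditions), invoking uniqueness to conclude. Both are standard and equally rigorous; the paper's route constructs the formula from scratch, while yours is shorter once the formula is given. For the estimates the paper simply declares them ``elementary,'' whereas you actually carry them out via the hyperbolic rewrite and the bound $\coth s\le 1+1/s$, followed by Cauchy--Schwarz---this adds useful detail without deviating in spirit. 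Your remark that the explicit $T^{-1/2}$ is there to be cancelled by the $T^{1/2}$ from Cauchy--Schwarz, with the residual $T$-dependence absorbed into $C(T)$, correctly identifies the reason for the slightly awkward form of the kernel bound.
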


\begin{proof}
For $\lambda \neq 0$, let
\[
s_\lambda(x) =\lambda^{-1/2} \sin(\lambda^{1/2} x), \qquad c_\lambda(x) = \cos(\lambda^{1/2} x)
\]
The general solution of $- y'' - \lambda y = f$ is
\begin{equation} \label{eq:persol}
y(x) = A s_\lambda(x) + B c_\lambda(x) - \int_0^x \! f(t) s_\lambda(x-t) \, dt.
\end{equation}
Applying periodic boundary conditions, solving the resulting system for $A, B$ and inserting it back into $y(x)$ and simplifying, we obtain \eqref{eq:pergreenid} and \eqref{eq:pergreendef}. The estimates are then elementary.
\end{proof}

\begin{proof}[Proof of Theorem~\ref{Tbandstability}]
We will prove the estimates for $T=1$. After that, it is an immediate calculation to rescale the estimates to an arbitrary period $T$, as in the proof of Lemma~\ref{l:discderivest}.

Let $A = 16C$, where $C$ is the constant from \eqref{freeresolvent}, and suppose
\[
\lambda  
\leq 
- (A Q + A^2 Q^2).
\]
From \eqref{freeresolvent} we see that $V R_{\lambda,0}$ is a bounded operator on $L^2[0,T]$,
\[
\lVert V R_{\lambda,0} \rVert_{2,2} \le \lVert V \rVert_{\B}  \lVert R_{\lambda,0}  \rVert_{2,\infty} 
\le 
Q C (\lvert \lambda\rvert^{-1/2} + \lvert\lambda\rvert^{-1}) \le \frac {2C}A 
= 
\frac 18,
\]
so the Neumann series
\[
 \sum_{n=0}^\infty R_{\lambda,0}  (-V R_{\lambda,0} )^n
\]
is convergent and equal to the inverse of $H_{V}^\per - \lambda$. This implies that 
\[
\sigma(H_V) \subset ( - (AQ + A^2Q^2), \infty).
\]
Moreover, since
\[
\left\lVert \sum_{n=0}^\infty  (-V R_{\lambda,0} )^n \right\rVert_{2,2} 
\le 
\frac 1{1-1/8} 
= 
\frac 87,
\]
composing this with $R_{\lambda,0} $ and using $\lVert R_{\lambda,0}  \rVert_{2,2} = \lvert \lambda \rvert^{-1}$ and \eqref{freeresolvent} proves the estimates
\begin{align}
\lVert R_{\lambda,V} \rVert_{2,2} & \le \frac 87 \lvert \lambda \rvert^{-1} \label{R1} \\
\lVert R_{\lambda,V} \rVert_{2,\infty} & \le \frac 87 C (\lvert \lambda \rvert^{-1/2} + \lvert \lambda \rvert^{-1}) \label{R2}
\end{align}

The resolvent identity, together with \eqref{R1}, \eqref{R2}, gives
\begin{align*}
\lVert R_{\lambda,V_1} - R_{\lambda,V_2} \rVert_{2,2} 
& \le 
\lVert R_{\lambda,V_1} \rVert_{2,2} \lVert V_1 - V_2 \rVert_{\B} \lVert R_{\lambda,V_2} \rVert_{2,\infty} \\
& \le 
\frac {64}{49} C \lVert V_1 - V_2 \rVert_{\B} (\lvert \lambda\rvert^{-3/2} + \lvert \lambda\rvert^{-2} ).
\end{align*}
By the spectral mapping, $\sigma(R_{\lambda,V_j}) = \{ (E_{n,V_j} - \lambda)^{-1} \mid n \in \mathbb{N}\} \cup \{ 0\}$ so by the variational principle,
\begin{equation}\label{R3}
\left\lvert \frac 1{E_{n,V_1} - \lambda} - \frac 1{E_{n,V_2} - \lambda} \right\rvert 
\le 
\lVert R_{\lambda,V_1} - R_{\lambda,V_2} \rVert_{2,2} 
\le  
\frac {64}{49} C \lVert V_1 - V_2 \rVert_{\B}  (\lvert \lambda\rvert^{-3/2} + \lvert \lambda\rvert^{-2} ).
\end{equation}
To turn this into an estimate for $\lvert E_{n,V_1} - E_{n,V_2} \rvert$, we set
\[
\lambda = - \max\left\{ 2 \lvert E_{n,V_2} \rvert, A Q + A^2 Q^2 \right\}.
\]
Then
\[
\lvert {E_{n,V_2} - \lambda} \rvert  \frac {64}{49} C \lVert V_1 - V_2 \rVert_{\B}  (\lvert \lambda\rvert^{-3/2} + \lvert \lambda\rvert^{-2} ) \le \frac 32 \lvert \lambda \rvert \frac {64}{49}  C (2Q) \lvert \lambda\rvert^{-1} \frac 2{AQ} < \frac 12,
\]
so \eqref{R3} is easily seen to imply
\begin{align*}
\lvert E_{n,V_1} - E_{n,V_2} \rvert & \le 2 \lvert E_{n,V_2} - \lambda \rvert^2  \frac {64}{49} C \lVert V_1 - V_2 \rVert_2  (\lvert \lambda\rvert^{-3/2} + \lvert \lambda\rvert^{-2} ) \\
& \le 2 \frac{9}4 \lvert \lambda \rvert^2 \frac {64}{49} C (\lvert \lambda\rvert^{-3/2} + \lvert \lambda\rvert^{-2} ) \lVert V_1 - V_2 \rVert_{\B}  \\
& \le  9C (\lvert \lambda\rvert^{1/2} +1 ) \lVert V_1 - V_2 \rVert_{\B}
\end{align*}
which completes the proof by our choice of $\lambda$.
\end{proof}

\section{Convergence of Periodic Spectra} \label{sec:specconv}

The results of Section~\ref{sec:bandstab} control the stability of bands of periodic spectra under $L^2$ perturbations. This will enable us to prove effective estimates on the measure of the spectrum in passing from $V_\ell$ to $V_{\ell+1}$. The next key ingredient one needs to control is the relationship between the spectra of the periodic approximants and the spectrum of the limiting potential. This will enable us to push our step-by-step estimates through to the limit. If $V_n \to V$ uniformly, then this is easy, since an unbounded version of Lemma~\ref{l:spec:pert} implies that $\sigma(H_{V_n}) \to \sigma(H_V)$ in the Hausdorff metric. However, in our setting, we only have $V_n \to V$ in a uniform local $L^2$ sense, so the convergence statement on the spectra is weaker. Nevertheless, we still have norm convergence of resolvents, which enables us to establish a ``pseudo local Hausdorff'' convergence result for the spectra which is strong enough to get lower bounds on the limiting spectrum.

First, let us recall the definition of the Hausdorff metric on compact subsets of $\R$.

\begin{defi}
Given two compact sets $F,K \subseteq \R$, the \emph{Hausdorff distance} between them is defined by
\begin{equation}\label{eq:hdimdef}
d_{\mathrm H}(F,K)
=
\inf \{ r > 0 : F \subseteq B_r(K) \text{ and } K \subseteq B_r(F) \}.
\end{equation}
In \eqref{eq:hdimdef}, $B_r(X)$ denotes the open $r$-neighborhood of the set $X \subseteq \R$.
\end{defi}

The following relationship between spectra of bounded, self-adjoint operators and the Hausdorff metric is well-known.

\begin{lemma}\label{l:spec:pert} If $A$ and $B$ are bounded self-adjoint operators, then
$$
d_{\mathrm H}(\sigma(A), \sigma(B))
\leq
\|A - B\|.
$$
\end{lemma}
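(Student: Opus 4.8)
The plan is to reduce everything to the elementary fact that, for a bounded self-adjoint operator $A$ and any $\lambda \notin \sigma(A)$, one has $\norm{(A-\lambda)^{-1}} = \dist(\lambda,\sigma(A))^{-1}$, which is immediate from the spectral theorem. Set $r = \norm{A-B}$. By the definition of $d_{\mathrm H}$ and the symmetry of the roles of $A$ and $B$, it suffices to show that every $\lambda \in \sigma(A)$ satisfies $\dist(\lambda,\sigma(B)) \le r$. Granting this (and the analogous statement with $A$ and $B$ interchanged), we get $\sigma(A) \subseteq B_s(\sigma(B))$ and $\sigma(B) \subseteq B_s(\sigma(A))$ for every $s > r$, hence $d_{\mathrm H}(\sigma(A),\sigma(B)) \le s$ for every $s > r$, and therefore $d_{\mathrm H}(\sigma(A),\sigma(B)) \le r$.

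To prove the claim, suppose for contradiction that $\lambda \in \sigma(A)$ but $\dist(\lambda,\sigma(B)) > r$. Then in particular $\lambda \notin \sigma(B)$, so $B - \lambda$ is invertible with $\norm{(B-\lambda)^{-1}} = \dist(\lambda,\sigma(B))^{-1} < r^{-1}$. Writing
\[
A - \lambda = (B-\lambda)\bigl( I + (B-\lambda)^{-1}(A-B) \bigr),
\]
we estimate $\norm{(B-\lambda)^{-1}(A-B)} \le \norm{(B-\lambda)^{-1}}\,\norm{A-B} < r^{-1}\cdot r = 1$, so the operator in parentheses is invertible via its Neumann series; consequently $A - \lambda$ is a product of two invertible operators, hence invertible, contradicting $\lambda \in \sigma(A)$. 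This establishes the claim, and interchanging $A$ and $B$ completes the proof.

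There is no real obstacle here; the only point requiring a small amount of care is bookkeeping around the open-neighborhood convention built into the definition of $d_{\mathrm H}$, which is why the conclusion is phrased as a bound holding for every $s > r$ and only then collapsed to $r$ itself.
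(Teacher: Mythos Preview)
Your proof is correct and is essentially the standard argument for this classical fact. The paper does not actually supply a proof of this lemma; it is stated there as well-known, so there is nothing to compare against beyond noting that your argument is the expected one. One microscopic remark: the chain $\norm{(B-\lambda)^{-1}}\,\norm{A-B} < r^{-1}\cdot r$ is formally awkward when $r=0$, but in that case $A=B$ and the conclusion is trivial, so no harm is done.
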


\begin{lemma} \label{l:ptspec:conv}
Given $V \in \PT(\R)$ with periodic approximants $(V_j)_{\j=1}^\infty$, let $H = -\Delta + V$ and $H_j = -\Delta + V_j$, and put $\Sigma = \sigma(H)$ and $\Sigma_j = \sigma(H_j)$. If $(I_j)_{j=1}^\infty$ is a family of compact subsets of $\R$ which converges to the compact set $I \subseteq \R$ in the Hausdorff metric, then, for every $\varepsilon > 0$, there exists $N \in \Z_+$ such that
$$
I_j \cap \Sigma_j
\subseteq
B_\varepsilon(I \cap \Sigma)
$$
whenever $j \geq N$. In particular,
\begin{equation}\label{eq:lebsemicont}
|I \cap \Sigma|
\geq
\limsup_{j \to \infty} |I_j \cap \Sigma_j|,
\end{equation}
where $|\cdot|$ denotes Lebesgue measure on $\R$.
\end{lemma}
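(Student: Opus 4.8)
The plan is to exploit the fact that, although we only have $V_j \to V$ in the Stepanov (uniform local $L^2$) sense, this is strong enough to give norm-resolvent convergence $H_j \to H$; this is where the Pastur--Tkachenko decay is irrelevant and only Stepanov convergence matters, via a bound of the type $\|(H_j+M)^{-1} - (H+M)^{-1}\| \le C(M)\|V_j - V\|_{\St}$ for $M$ large. Fix a strictly increasing homeomorphism $\varphi: \R \to (-1,1)$ (or $z \mapsto (z+i)^{-1}$ after compactification) so that $\sigma(\varphi(H_j))$, $\sigma(\varphi(H))$ are compact subsets of a fixed compact interval and spectra converge in the Hausdorff metric by Lemma~\ref{l:spec:pert}. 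So I first reduce the problem to compact operators where Hausdorff convergence of spectra is available, and note that $\varphi$ is bi-Lipschitz on any bounded region, so Hausdorff convergence and the inclusion $I_j \cap \Sigma_j \subseteq B_\varepsilon(I \cap \Sigma)$ transfer back and forth between the energy picture and the compactified picture (all $I_j$ lie in a fixed bounded set since they converge in Hausdorff metric).

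Next I would prove the inclusion by contradiction. Suppose not: then there is $\varepsilon_0 > 0$, a subsequence $j_k \to \infty$, and points $x_{j_k} \in I_{j_k} \cap \Sigma_{j_k}$ with $\dist(x_{j_k}, I \cap \Sigma) \ge \varepsilon_0$. Since the $I_j$ are Hausdorff-convergent, the $x_{j_k}$ lie in a fixed compact set, so after passing to a further subsequence $x_{j_k} \to x_\infty$. Because $I_{j_k} \to I$ in the Hausdorff metric and $x_{j_k} \in I_{j_k}$, the limit satisfies $x_\infty \in I$. Because $x_{j_k} \in \Sigma_{j_k} = \sigma(H_{j_k})$ and we have norm-resolvent convergence, $x_\infty \in \sigma(H) = \Sigma$: indeed, norm-resolvent convergence implies that if $x_{j_k} \in \sigma(H_{j_k})$ and $x_{j_k} \to x_\infty$ then $x_\infty \in \sigma(H)$ (equivalently, $(\varphi(x_{j_k}))$ is in $\sigma(\varphi(H_{j_k}))$, these converge to $x_\infty$, and Lemma~\ref{l:spec:pert} forces $\varphi(x_\infty) \in \sigma(\varphi(H))$). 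Hence $x_\infty \in I \cap \Sigma$, so $\dist(x_{j_k}, I \cap \Sigma) \to 0$, contradicting $\dist(x_{j_k}, I \cap \Sigma) \ge \varepsilon_0$. This proves the inclusion for all large $j$.

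Finally, \eqref{eq:lebsemicont} follows from the inclusion by monotonicity and outer regularity of Lebesgue measure: for any $\varepsilon > 0$ and all $j$ large, $|I_j \cap \Sigma_j| \le |B_\varepsilon(I \cap \Sigma)|$, hence $\limsup_{j\to\infty} |I_j \cap \Sigma_j| \le |B_\varepsilon(I \cap \Sigma)|$, and letting $\varepsilon \downarrow 0$ gives $\limsup_{j\to\infty}|I_j \cap \Sigma_j| \le |\overline{I \cap \Sigma}| = |I \cap \Sigma|$ since $I \cap \Sigma$ is already closed. The main obstacle I anticipate is the first step: setting up norm-resolvent convergence $H_j \to H$ from Stepanov convergence of potentials with clean constants, and packaging ``$x_{j} \in \sigma(H_j)$, $x_j \to x_\infty$ $\Rightarrow$ $x_\infty \in \sigma(H)$'' cleanly through a bounded transform of the operators; the contradiction argument and the measure-theoretic conclusion are then routine.
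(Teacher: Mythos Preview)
Your proposal is correct and follows essentially the same route as the paper: norm-resolvent convergence (the paper cites \cite[Lemma~17.17]{PasturFigotin}, after securing a uniform lower bound on $\inf\Sigma_j$ via Theorem~\ref{Tbandstability}), a bounded transform (the paper uses the resolvent $g(z)=(z-\lambda_0)^{-1}$ rather than a generic $\varphi$), Hausdorff convergence of the transformed spectra via Lemma~\ref{l:spec:pert}, and transfer back by bi-Lipschitz continuity on bounded sets. Your sequential-compactness contradiction in fact makes the key inclusion step more explicit than the paper's somewhat terse treatment of it.
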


\begin{proof}
This follows from the spectral mapping theorem and norm-resolvent convergence. To begin, notice that the bottoms of the spectra converge; that is,
$$
E_1
:=
\lim_{j \to \infty} E_{1,j}
$$
exists, where $E_{1,j} = E_{1,V_j} = \inf\Sigma_j$ for each $j \geq 1$. This is a consequence of the Pastur--Tkachenko condition \eqref{eq:pt:def} and the stability condition \eqref{eq:edgestability} from Theorem~\ref{Tbandstability} with $n = 1$. In particular,
$$
\lambda_0
:=
\inf_{j \geq 1} E_{1,j}
-1
$$
is finite. Since $\mathrm{dist}(\lambda_0,\sigma(H_j)) \geq 1$ for every $j$, we may apply \cite[Lemma~17.17]{PasturFigotin} to see that $H_j$ converges to $H$ in the norm-resolvent sense, i.e.,
$$
R
=
\lim_{j \to \infty} R_j,
$$
where $R = (H-\lambda_0)^{-1}$ and $R_j = (H_j - \lambda_0)^{-1}$. Consequently, Lemma~\ref{l:spec:pert} implies that
$$
\sigma(R)
=
\lim_{j \to \infty} \sigma(R_j),
$$
where the limit is taken in with respect to the Hausdorff metric on compact subsets of $\R$. By the spectral mapping theorem for unbounded operators, we have
$$
\Sigma_j'
:=
\sigma(R_j)
=
\overline{g(\Sigma_j)}
\text{ for every } j \geq 1, \text{ and }
\Sigma'
:=
\sigma(R)
=
\overline{g(\Sigma)},
$$
with $g(z) = (z - \lambda_0)^{-1}$; see \cite[Theorem~3.18]{Teschl2000}. Notice that $g$ is invertible with inverse $h(z) = z^{-1} +\lambda_0$. Without loss of generality, we may assume $I_j \subseteq [\lambda_0 + 1,\infty)$ for each $j$. It is easy to see that $I_j' := g(I_j)$ is a compact subset of $(0,1]$ for each $j$. Given $\delta > 0$ Hausdorff convergence of $\Sigma_j'$ implies
$$
I_j' \cap \Sigma_j'
\subseteq
B_\delta(I' \cap \Sigma')
$$
for all sufficiently large $j$. Of course, $h$ is  Lipschitz on $I'$, so this implies (for some $I$-dependent constant $\kappa > 0$):
$$
I_j \cap \Sigma_j
\subseteq
B_{\kappa\delta}(I \cap \Sigma)
$$
for all sufficiently large $j$. The semicontinuity statement \eqref{eq:lebsemicont} follows by sending $\delta \to 0$.

\end{proof}

\section{Proof of Theorem \ref{t:pt}} \label{sec:proofs}

We can now combine the results of the previous three sections to prove the main theorem. The general strategy is as follows:
\begin{itemize}
\item Use Corollary~\ref{coro:breakct} to obtain a lower bound on the lengths of bands of the spectra of the periodic approximants.
\item Use Theorem~\ref{Tbandstability} to obtain an upper bound on the lengths of gaps of the spectra of the periodic approximants.
\item These two items give us effective upper bounds on quantities of the form $|I \cap \Sigma_\ell \setminus \Sigma_{\ell+1}|$ with $I$ a compact interval.
\item Use Lemma~\ref{l:ptspec:conv} to push the estimates through to the limit.
\end{itemize}

\begin{proof}[Proof of Theorem~\ref{t:pt}]
Suppose $V \in \PT(\R)$, and let $0 < \tau < 1$ be given. If $V$ is periodic, the conclusion of the theorem is trivial, so assume that $V$ is aperiodic. Let $(V_n)_{n=1}^\infty$ be a sequence of periodic potentials, such that $V_n$ is $T_n$-periodic, $T_n$ divides $T_{n+1}$ for each $n$, and $\|V - V_n\|_{\St}$ is $o \! \left(e^{-bT_{n+1}}\right)$ for every $b > 0$. Notice that this property (i.e.\ equation \eqref{eq:pt:def}) is preserved if one removes finitely many terms of the sequence $(V_n)_{n=1}^\infty$ and consecutively renumbers the resulting sequence. In particular, it is no loss of generality to assume $T_n \geq 1$ for every $n$. For each $n \in \Z_+$, denote $\Sigma_n = \sigma(H_{V_n})$, and put $\Sigma = \sigma(H_V )$. Repeating the argument from the beginning of the proof of Lemma~\ref{l:ptspec:conv}, we see that $\sup_n \lvert \inf \Sigma_n \rvert$ is finite. Since spectral homogeneity is preserved by adding a constant to the potential, this means that we may assume without loss of generality that $\Sigma_n \subseteq[1,\infty)$ for every $n$. Now, define
\begin{align*}
Q
& =
\sup_{n \geq 1} \|V_n\|_{\B} \\
K
& =
\max\left( C, C_1,  Q, CQ^{1/2}, 8 \right),
\end{align*}
where $C$ is the universal constant from Lemma~\ref{l:discderivest} and $C_1$ is the universal constant from Theorem~\ref{Tbandstability}. Notice that $K$ depends only on $Q$. Using the Pastur--Tkachenko condition \eqref{eq:pt:def}, it is easy to see that
$$
\sum_{n=1}^\infty T_{n+1}^6 e^{K T_{n+1}} \|V_n - V_{n+1}\|_{\B}
<
+\infty,
$$
Thus, by removing finitely many terms of the sequence $(V_n)_{n=1}^\infty$ and consecutively renumbering once more, we may assume that
\begin{equation} \label{eq:smalltail}
\sum_{n=1}^\infty
T_{n+1}^6 e^{KT_{n+1}} \|V_n - V_{n+1}\|_{\B} 
<
\frac{1 - \tau}{3K^4}.
\end{equation}
Put
$$
\delta_0 
= 
\min\left(K^{-1} T_1^{-3} e^{-KT_1}, \frac{1 - \tau}3 \right).
$$
Following the general strategy of \cite{F14}, we will prove the following estimate:
\begin{equation} \label{eq:sbs:homog:est}
|B_\delta(x) \cap \Sigma_N|
\geq
\delta \tau
\text{ for all } 
x \in \Sigma_N 
\text{ and every } 
0 < \delta \leq \delta_0
\end{equation}
for all $N \in \Z_+$. To that end, fix $N \in \Z_+$, $x \in \Sigma_N$, and $0 < \delta \leq \delta_0$. For notational simplicity, put $s = s(x) := 1+\sqrt{x}$. If $\delta \leq sK^{-1} T_N^{-3} e^{-K T_N}$, \eqref{eq:sbs:homog:est} is an obvious consequence of Corollary~\ref{coro:breakct}. Specifically, $\delta$ is less than the length of the band of $\Sigma_N$ which contains $x$, so
$$
|B_{\delta}(x) \cap \Sigma_N|
\geq
\delta
$$ 
in this case. Notice that we have used the assumption $\inf\Sigma_N \geq 0$ to get rid of $E_0$ from the statement of Corollary~\ref{coro:breakct}. Otherwise, $\delta > sK^{-1} T_N^{-3} e^{-K T_N}$, and there is a unique integer $n$ with $1 \leq n \leq N-1$ such that
\begin{equation}\label{deltachoice}
sK^{-1} T_{n+1}^{-3} e^{-K T_{n+1}} 
< 
\delta 
\leq 
sK^{-1} T_{n}^{-3} e^{-K T_n}.
\end{equation}
This integer $n$ is relevant, as it determines the periodic approximant corresponding to the length scale $\delta$ near and above $x$. More precisely, by Corollary~\ref{coro:breakct}, any band of $\Sigma_n$ which is contained in $[x - 1, \infty)$ has length at least $\delta$. By Theorem~\ref{Tbandstability}, there exists $x_0 \in \Sigma_n$ with
\begin{equation}
|x-x_0| 
\leq 
s K^3 \sum_{\ell = n}^{N - 1}
T_{\ell+1}^3 \|V_\ell - V_{\ell+1}\|_{\B}
<
1.
\end{equation}
This bound follows by a straightforward backward induction on $n$, starting from $n=N$. The inductive step uses Theorem~\ref{Tbandstability} and the estimate $1+|x_0|^{1/2} \le 2 s$, which follows from the inductive assumption. Using \eqref{deltachoice}, we deduce
\begin{align}
\nonumber
\lvert x-x_0 \rvert
& \leq 
s K^3  \sum_{\ell = n}^{N - 1}
T_{\ell + 1}^3 \|V_\ell - V_{\ell+1}\|_{\B} \\
\nonumber
& < \delta e^{K T_{n+1}} T_{n + 1}^3 K^4  \sum_{\ell= n}^{N - 1} T_{\ell + 1}^3 \|V_\ell - V_{\ell+1}\|_{\B} \\
\nonumber
& < \delta K^4  \sum_{\ell= n}^{N - 1} e^{K T_{\ell+1}} T_{\ell + 1}^6 \|V_\ell - V_{\ell+1}\|_{\B} \\
\label{eq:expsum2}
& < \delta \frac{1-\tau}3.
\end{align}
Thus, there exists an interval $I_0$ with $x_0 \in I_0 \subseteq B_{\delta}(x) \cap \Sigma_n$ such that 
$$
|I_0| 
=
\delta - \frac{1-\tau}{3}\delta
=
\frac{2+\tau}{3}\delta.
$$
By standard measure theory, we have
$$
|B_\delta(x) \cap \Sigma_N|
\geq
|I_0 \cap \Sigma_n| - \sum_{\ell = n}^{N - 1}|I_0 \cap (\Sigma_\ell \setminus \Sigma_{\ell + 1})|
$$
By Corollary~\ref{coro:breakct}, the interval $I_0$ completely contains at most $\delta K s^{-1} T_{\ell+1}^{3} e^{K T_{\ell+1}}$ bands of $\Sigma_{\ell + 1}$ for each $\ell \geq n$. Consequently, 
\begin{align*}
|I_0 \cap (\Sigma_\ell \setminus \Sigma_{\ell+1})|
& \leq
K^3 T_{\ell+1}^3 s \left(\delta K s^{-1} T_{\ell+1}^{3} e^{KT_{\ell+1}} + 1 \right) \cdot
\| V_\ell - V_{\ell + 1} \|_{\B}  \\
& \leq
2 \delta K^4 T_{\ell+1}^6 e^{KT_{\ell+1}}  \cdot
\| V_\ell - V_{\ell + 1} \|_{\B} 
\end{align*}
for each $\ell \geq n$, by Theorem~\ref{Tbandstability} and \eqref{deltachoice}. Summing this over $\ell$ and estimating the result with \eqref{eq:smalltail}, we obtain
\begin{align*}
\sum_{\ell = n}^{N - 1} |I_0 \cap (\Sigma_\ell \setminus \Sigma_{\ell + 1} )|
& \leq
\sum_{\ell=n}^{N-1} 2 \delta K^4 T_{\ell+1}^6 e^{KT_{\ell+1}}  \cdot
\| V_\ell - V_{\ell + 1} \|_{\B}  \\
& < 2 \delta \frac{1-\tau}3.
\end{align*}
Putting all of this together, we have
\begin{align*}
|B_\delta(x) \cap \Sigma_N|
& \geq
|I_0 \cap \Sigma_n| - \sum_{\ell = n}^{N-1}|I_0 \cap(\Sigma_\ell \setminus \Sigma_{\ell + 1})| \\
& >
\frac{2+\tau}{3} \delta 
- \frac{2 - 2\tau}{3} \delta \\
& =
\tau \delta.
\end{align*}
This proves \eqref{eq:sbs:homog:est} for arbitrary $N \in \Z_+$. Using Lemma~\ref{l:ptspec:conv}, we obtain
$$
|B_\delta(x) \cap \Sigma|
\geq
\tau\delta
\text{ for all } x \in \Sigma, \text{ and } 0 < \delta \leq \delta_0,
$$
to wit, $\Sigma$ is homogeneous.
\end{proof}

\section{The Discrete Setting} \label{sec:discrete}

In this section, we will describe how to extend Theorem~\ref{t:pt} to Jacobi and CMV matrices. In this case, the discrete setting is markedly easier than the continuum setting, so we will only sketch the proof. Recall that a Jacobi matrix is an operator of the form $J = J_{a,b}$ on $\ell^2(\Z)$, defined by
$$
(J\varphi)_n
=
a_{n-1} \varphi_{n-1} + a_n \varphi_{n+1} + b_n\varphi_n,
\quad
\varphi \in \ell^2(\Z), \, n \in \Z,
$$
where $a$ and $b$ are bounded, real-valued sequences. A CMV matrix is a unitary operator on $\ell^2(\Z)$ which has a matrix representation of the form
$$
\small
\E
=
\begin{pmatrix}
\ddots & \ddots & \ddots &&&&&  \\
\overline{\alpha_0}\rho_{-1} & -\overline{\alpha_0}\alpha_{-1} & \overline{\alpha_1}\rho_0 & \rho_1\rho_0 &&& & \\
\rho_0\rho_{-1} & -\rho_0\alpha_{-1} & -\overline{\alpha_1}\alpha_0 & -\rho_1 \alpha_0 &&& & \\
&  & \overline{\alpha_2}\rho_1 & -\overline{\alpha_2}\alpha_1 & \overline{\alpha_3} \rho_2 & \rho_3\rho_2 & & \\
& & \rho_2\rho_1 & -\rho_2\alpha_1 & -\overline{\alpha_3}\alpha_2 & -\rho_3\alpha_2 &  &  \\
& &&& \overline{\alpha_4} \rho_3 & -\overline{\alpha_4}\alpha_3 & \overline{\alpha_5}\rho_4 & \rho_5\rho_4 \\
& &&& \rho_4\rho_3 & -\rho_4\alpha_3 & -\overline{\alpha_5}\alpha_4 & -\rho_5 \alpha_4  \\
& &&&& \ddots & \ddots &  \ddots
\end{pmatrix},
$$
where $\alpha_n \in \D$ for all $n \in \Z$, and $\rho_n = (1 - |\alpha_n|^2)^{1/2}$. 

 Let us say that a Jacobi matrix $J=J_{a,b}$ is of Pastur--Tkachenko type if there exists $\gamma > 0$ such that $a_n \geq \gamma$ for all $n$ and there are $p_n$-periodic Jacobi matrices $J^{(n)}$ such that
$$
\lim_{n \to \infty} e^{Cp_{n+1}} \| J^{(n)} - J \|_\infty
=
0
$$
for all $C > 0$. Such matrices were studied in \cite{egorova}. Similarly, a CMV matrix $\E$ is of Pastur--Tkachenko type if there are periodic CMV matrices $\E^{(n)}$ such that
$$
\lim_{n \to \infty} e^{Cp_{n+1}} \| \mathcal E^{(n)} -\mathcal E \|_\infty
=
0
$$
for all $C>0$.

\begin{theorem} \label{t:pt:disc}
If $J$ is a Jacobi matrix of Pastur--Tkachenko type, then $\sigma(J)$ is $\tau$-homogeneous for every $\tau \in (0,1)$. Similarly, if $\E$ is a CMV matrix of PT type, then $\sigma(\E)$ is a $\tau$-homogeneous subset of $\partial \D$ for all $\tau \in (0,1)$.
\end{theorem}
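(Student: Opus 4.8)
The plan is to transplant the three-ingredient argument of Sections~\ref{sec:floq}--\ref{sec:proofs} to the discrete setting, where every step becomes easier because Jacobi and CMV matrices are bounded and carry no energy-dependent bookkeeping---in particular, none of the $\sqrt E$ factors that forced the delicate estimates of Sections~\ref{sec:floq} and~\ref{sec:bandstab}. I will describe the Jacobi case; the CMV case is entirely parallel after replacing the monodromy matrix of Hill's equation by the monodromy matrix built from the Szeg\H{o} transfer matrices, and the half-line $[E_0,\infty)$ by the circle $\partial\D$.

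\emph{Band-length lower bound} (the analog of Corollary~\ref{coro:breakct}). For a $p$-periodic $J^{(n)}=J_{a,b}$ with $\|J^{(n)}\|_\infty\le M$ and $a_k\ge\gamma>0$, define the discriminant $\Delta_n(z)=\tr\!\big(A_p(z)\cdots A_1(z)\big)$, where $A_k(z)=a_k^{-1}\left(\begin{smallmatrix} z-b_k & -a_{k-1}\\ a_k & 0\end{smallmatrix}\right)$ is the one-step transfer matrix, so that $\sigma(J^{(n)})=\{E\in\R:|\Delta_n(E)|\le 2\}=\bigcup_j[\alpha_j,\beta_j]$ with $\Delta_n$ strictly monotone of total variation $\ge 4$ on each band. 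Since $a_k\ge\gamma$ and $\|J^{(n)}\|_\infty\le M$, each $A_k(z)$ has operator norm at most some $c=c(M,\gamma)$ on the disk $\{|z|\le M+1\}$, hence $|\Delta_n(z)|\le 2c^p$ there; the Cauchy estimate on circles of radius $1$ then gives $|\Delta_n'(E)|\le 2c^p$ for $E\in\sigma(J^{(n)})\subseteq[-M,M]$, and the mean value theorem yields the clean bound $|\beta_j-\alpha_j|\ge 2c^{-p}=2e^{-p\log c}$. A single exponential in $p$ is all one needs, since the Pastur--Tkachenko hypothesis supplies $\|J^{(n)}-J\|_\infty=o(e^{-Cp_{n+1}})$ for every $C$.

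\emph{Stability of band edges and convergence of spectra} (the analogs of Theorem~\ref{Tbandstability} and Lemma~\ref{l:ptspec:conv}). Here both statements are essentially trivial. The band edges of a $p$-periodic Jacobi matrix are the eigenvalues of its restriction to $\{1,\dots,2p\}$ with periodic and antiperiodic boundary conditions, and for two such matrices differing by a perturbation of $\|\cdot\|_\infty$-size $\eta$ the min--max principle shows the $k$th eigenvalues (counted with multiplicity) differ by at most $2\eta$---with no factor corresponding to $(1+T|E_{n,V_2}|^{1/2})$, precisely because the operators are bounded. And since $\|J^{(n)}-J\|\le 3\|J^{(n)}-J\|_\infty\to 0$, Lemma~\ref{l:spec:pert} gives $\sigma(J^{(n)})\to\sigma(J)$ in the Hausdorff metric; combined with the fact that $|B_\varepsilon(K)|\to|K|$ as $\varepsilon\downarrow 0$ for compact $K\subseteq\R$, this yields $|I\cap\sigma(J)|\ge\limsup_n|I\cap\sigma(J^{(n)})|$ for every compact interval $I$, which is what the passage to the limit requires.

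\emph{Assembly, and the one point needing care.} With these ingredients one runs the step-by-step argument from the proof of Theorem~\ref{t:pt} verbatim, but with the energy factor $s=1+\sqrt x$ replaced by $1$ throughout: fix $\tau\in(0,1)$, pass to a subsequence of the periodic approximants so that the telescoping sum $\sum_n e^{Cp_{n+1}}\|J^{(n)}-J^{(n+1)}\|_\infty$ is as small as needed, choose the scale $n=n(\delta)$ by the discrete analog of \eqref{deltachoice}, locate $x_0\in\sigma(J^{(n)})$ within distance $\ll\delta$ of $x$ by the stability estimate, and bound $|I_0\cap(\sigma(J^{(\ell)})\setminus\sigma(J^{(\ell+1)}))|$ using that $I_0$ meets at most $O(\delta\,e^{C p_{\ell+1}})$ bands of $\sigma(J^{(\ell+1)})$, each of which can shrink by at most $O(\|J^{(\ell)}-J^{(\ell+1)}\|_\infty)$; telescoping and invoking Hausdorff convergence pushes the homogeneity bound onto $\sigma(J)$. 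There is no deep obstruction; the only point demanding attention is the uniform transfer-matrix bound underlying the band-length estimate, which for Jacobi matrices is exactly the hypothesis $a_n\ge\gamma>0$, and which for CMV matrices---whose transfer matrices carry the factors $\rho_n^{-1}=(1-|\alpha_n|^2)^{-1/2}$---requires $\sup_n|\alpha_n|<1$; this should either be folded into the definition of ``Pastur--Tkachenko type'' or extracted from norm convergence $\E^{(n)}\to\E$, after which the CMV discriminant (a Laurent polynomial in $z$, with $\partial\D$ in place of $[E_0,\infty)$) obeys the same Cauchy and mean-value bounds and the rest of the proof goes through unchanged.
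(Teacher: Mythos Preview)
Your proposal is correct and follows essentially the same three-ingredient strategy the paper sketches: an exponential-in-period band-length lower bound via an elementary discriminant estimate, the trivial $\ell^\infty$ band-edge stability from min--max, and genuine Hausdorff convergence of the spectra from uniform operator convergence, assembled exactly as in the proof of Theorem~\ref{t:pt}. The only cosmetic difference is that you bound $|\Delta_n'|$ via Cauchy's estimate on a disk containing the spectrum, whereas the paper suggests differentiating the transfer-matrix product directly via the product rule; both routes are elementary and yield the same $e^{Cp}$ bound. Your remark that the CMV case tacitly requires $\sup_n|\alpha_n|<1$ to control the factors $\rho_n^{-1}$ in the transfer matrices is a point the paper's sketch leaves implicit.
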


\begin{proof}
Simply follow the proof of Theorem~\ref{t:pt}. In particular, there are three main ingredients which are used in the proof:
\begin{enumerate}
\item The discriminant estimate from Lemma~\ref{l:discderivest}
\item The band edge stability estimate from Theorem~\ref{Tbandstability}.
\item The spectral convergence result from Lemma~\ref{l:ptspec:conv}.
\end{enumerate}

The analog of (1) in this setting is elementary. In this case, the monodromy matrix is a product of $p$ one-step transfer matrices. Taking the derivative using the product rule, and estimating the resulting terms gives an analogous estimate for the derivative of the Jacobi/CMV discriminants on their spectra.

Since the periodic approximants converge \emph{uniformly} to the limiting operator, the stability estimates from (2) can be replaced by the $\ell^\infty$ theory, which is much simpler. More precisely, in the Jacobi case, $\| J^{(n)} - J^{(n+1)}\| = \delta$ implies that the each band edge of $J^{(n+1)}$ is within $\delta$ of a band edge of $J^{(n)}$, where the latter is thought of as a degenerate $p_{n+1}$-periodic operator. In the language of \cite{F14}, every band edge of $J^{(n+1)}$ is within $\delta$ of a $(p_{n+1}/p_n)$-break point of $J^{(n)}$.

Finally, since the operators converge uniformly in this case, we can replace (3) by honest Hausdorff convergence of the approximating spectra, which allows us to push the step-by-step estimates through to the limiting spectrum via a semicontinuity argument as before.

\end{proof}

\end{document}